\documentclass[11pt]{amsart}
\usepackage{amsmath}
\usepackage{graphicx, xcolor}
\newtheorem{theorem}{Theorem}[section]
\newtheorem{prop}{Proposition}[section]
\newtheorem{lemma}{Lemma}[section]
\newtheorem{corollary}{Corollary}[section]
\newtheorem{definition}{Definition}[section]
\newtheorem{notation}{Notation}[section]
\newtheorem{remark}{Remark}[section]

\newcommand{\E}{\mathbb{E}} 
 
\newcommand{\calP}{\mathcal{P}}

\numberwithin{equation}{section}

\setlength{\textwidth}{6.15in}
  \setlength{\textheight}{8.7in}
 \setlength{\oddsidemargin}{0.22in}
\setlength{\evensidemargin}{0.22in}
  \setlength{\topmargin}{0.35in}
  \setlength{\parskip}{0.08in}
\voffset-10mm

\def\C{\mathcal{C}}
\def\R{[0,\infty)}
\def\Reals{\mathbb{R}}
\def\intR{\int_\Reals}

\newcommand{\N}{\mathbb{N}}




\newcommand{\bp}{\begin{proof}[\ensuremath{\mathbf{Proof}}]}
\newcommand{\bs}{\begin{proof}[\ensuremath{\mathbf{Solution}}]}
\newcommand{\ep}{\end{proof}}
\newcommand{\be}{\begin{equation}}
\newcommand{\ee}{\end{equation}}

\begin{document}

\def\Xint#1{\mathchoice
   {\XXint\displaystyle\textstyle{#1}}%
   {\XXint\textstyle\scriptstyle{#1}}%
   {\XXint\scriptstyle\scriptscriptstyle{#1}}%
   {\XXint\scriptscriptstyle\scriptscriptstyle{#1}}%
   \!\int}
\def\XXint#1#2#3{{\setbox0=\hbox{$#1{#2#3}{\int}$}
     \vcenter{\hbox{$#2#3$}}\kern-.5\wd0}}
\def\ddashint{\Xint=}
\def\dashint{\Xint-}

\title[Sticky particles with sticky boundary]{Asymptotics of the Sticky Particles evolution}


\author{Ryan Hynd}
\address{Department of Mathematics, University of Pennsylvania, Philadelphia, PA 19104}
\email{rhynd@math.upenn.edu}

\author{Adrian Tudorascu}
\address{Department of Mathematics, West Virginia University, Morgantown, WV  26506}
\email{adriant@math.wvu.edu}



\keywords{Pressureless Euler, Sticky Particles System, Sticky Particles Flow Equation, Scalar Conservation Laws, Lagrangian Coordinates; MSC 2010: 35A02, 35C99, 35F50, 35Q70, 35R06}

\begin{abstract}
We study the long-time asymptotic behavior of the Sticky Particles dynamics on the real line. The time average of the Sticky Particles Lagrangian map has a limit which arises as a general property of projections onto closed convex cones in Hilbert spaces. More notably, we prove that the map itself has an asymptotic limit in the case where the Sticky Particles dynamics is confined to a compact set. \end{abstract}

\maketitle

\section{Introduction}
\subsection{Overview}
The general \emph{Pressureless Euler Equations} in one spatial dimension reads
\begin{equation}\label{PE}\tag{PE}
	\left\lbrace
	\begin{aligned}
		\partial_t \rho + \partial_x (\rho v) = 0 \\
		\partial_t (\rho v) + \partial_x (\rho v^2) = 0,\\
	\end{aligned}
	\right.
\end{equation}
where $\rho$ represents the mass density of the fluid at a particular location and time and $v$ represents the velocity of the fluid at a particular location and time.  We will restrict the space of allowable initial mass distributions to probability measures with finite second moment. This is a natural space, since in practical applications, the mass of the entire system is finite and so, without loss of generality, can be normalized to unity. The finite moment condition is also natural, as this only quantifies the fact that these mass distributions do not escape at infinity; moreover, prior works obtained existence of solutions by studying the system in the context of the Wasserstein space. 

Zeldovich \cite{Zeldovich1970} introduced the {\it Sticky Particles} model in order to provide a raw description of the formation of large scale structures in the universe.  The {\it Sticky Particles} model can be briefly described as follows.  If $m_i$, $i=1,...,n$ is a discrete system of masses initially located at $-\infty < x_1 < ... < x_n < +\infty$ and moving with initial velocities $v_i$, $i=1,...,n$, then one makes the assumption that the velocities remain constant while there is no collision.  At the collision of a group of particles, the particles stick together and the initial velocity of the newly formed particle is given by the conservation of momentum. It turns out the evolution of this system is described by (PE). In many of the works on existence of solutions \cite{Brenier1998}, \cite{weinan1996}, \cite{Tudorascu2008}, \cite{Tudorascu2015}, \cite{Hynd2019}, etc. the initial distribution is approximated by averages of Dirac masses and the ensuing Sticky Particles system is used to approximate solutions to (PE). 

The pressureless Euler system has been studied by different techniques in \cite{Brenier1998}, \cite{Bouchut1999}, \cite{Hynd2019}, \cite{Natile2009}, \cite{Tudorascu2008}, \cite{Tudorascu2015}, \cite{weinan1996}.  These techniques include the Sticky Particles model, description of the problem by an alternative scalar conservation law problem, and a semigroup approach. At the heart of our approach lies the identification (under appropriate initial conditions) of the Lagrangian solutions cf. Hynd \cite{Hynd2019} with the Scalar Conservation Laws solutions \cite{Brenier1998}, \cite{Tudorascu2008}, etc. We would also like to acknowledge the contribution in \cite{Natile2009}, where the solution is constructed by a Sticky Particles semigroup approach and a similar Lagrangian representation of the solution is obtained, albeit in a slightly weaker sense.

In a recent paper \cite{SuderTudor} it was shown that Lagrangian solutions as in \cite{Hynd2019} satisfy the strong initial continuity of the energy, which, together with the Oleinik condition, is a necessary ingredient for the uniqueness result. The latter was obtained by means of explicitly linking Hynd's Lagrangian solutions \cite{Hynd2019} and the Scalar Conservation Laws solutions (SCL solutions; see \cite{Brenier1998} and \cite{Tudorascu2008}) to the distributional solutions to a related problem whose well-posedness was shown in \cite{HuangWang}. More precisely, in \cite{SuderTudor} it is shown (that if $v_0$ is right-continuous and bounded on the real line, then any Lagrangian solution must coincide with the (unique) SCL solution. Besides the strong initial continuity of the energy condition (SICE), a stronger version of the Oleinik condition was necessary for this connection (with the solutions in \cite{HuangWang}) to work: the velocity $v$ admits a Borel representative $\hat v$ (i.e. $\hat v(t,\cdot)=v(t,\cdot)$, $\rho(t,\cdot)$-a.e. for all $t$) which satisfies a stronger version of the Oleinik condition, namely \eqref{eOleinik} ({\it everywhere} Oleinik), in the sense that the pertinent inequality is required to hold {\it everywhere} (and not just $\rho(t,\cdot)$-a.e.). This class of solutions is stable and contains the discrete Sticky Particles solutions \cite{SuderTudor}, so we call such solutions SPS (Sticky Particles Solutions).
 
In another recent paper \cite{TudorSPS}, the second author employed SPS and a reflection principle to solve the problem in the case where $\C$ is an arbitrary closed subset of the real line:  first, the case  $\C:=[0,\infty)$ (the case $(-\infty,0]$ follows by reflection) and that of a nondegenerate closed interval, say $\C:=[0,1]$, were analyzed. Then it is quite straightforward to glue the pieces together and produce a solution to the general problem. Of course, boundary conditions needed be imposed when $\partial\C$ is nonempty (that is, $\C\neq\Reals$) and in \cite{TudorSPS} it is successfully argued that among the two natural choices, i.e. reflective vs sticky boundary, only the latter leads to a well-posed problem. 

In the present paper we study the asymptotic behavior of SPS whose evolution is confined to a compact interval. It turns out that the solution $\rho$ converges to some limiting probability distribution $\rho_\infty$ as $t\rightarrow\infty$.

We conclude the introduction with some preliminary definitions and notation. In Section 2 we show that SPS solution on a bounded interval constructed in \cite{TudorSPS} can also be completely described in terms of the Lagrangian maps associated with the SPS for the Cauchy problem. In Section 3 we make the connection with the characterization of the Lagrangian map found in \cite{Natile2009}.  This allows for an obvious decay estimate for the time averaged Lagrangian map for the general Cauchy problem. In the case of a bounded interval, the Lagrangian map is uniformly (in space-time) bounded in $L^\infty$ so the said time average is trivially zero. The natural question then is if the map itself converges to a stationary map in the long-time asymptotic limit; we prove in Theorem \ref{main1} that it does. Section 4 deals with some open questions related to the rate of convergence to the equilibrium, while Section 5 presents an alternate proof for the existence of the asymptotic limit by employing the Lagrangian description from \cite{Hynd2019} with no recourse to the convex cone projection uncovered in \cite{Natile2009}.

\subsection{Preliminaries}
Several tools will be used in this paper, which are described below. Throughout this manuscript we denote by $t\in[0,\infty)$ the time variable and by $y\in\mathbb{R}$ the spatial variable; we reserve $x$ for the real numbers in the open interval $(0,1)$. This way, for any Borel probability measure $\rho$ on $\mathbb{R}$ we have the right-continuous optimal map $N=N(x)$ which pushes the Lebesgue measure restricted to $(0,1)$ forward to $\rho$, while its generalized inverse is the right-continuous cumulative distribution function of $\rho$, denoted by $M=M(y)$.  

\begin{notation}
The set of Borel probability measures on $\C$
will be denoted by $\mathcal{P}(\C)$. These objects can also be regarded as Borel probability measures on $\Reals$ whose support is a subset of $\C$.
\end{notation}

\begin{notation}
The set of Borel probability measures on $\C$ with finite second moment, i.e. those measures $\rho \in \mathcal{P}(\mathbb{R})$ such that 
\[
\mathrm{spt}(\rho)\subset\C\mbox{ and }\int_\mathbb{\mathbb{R}} y^2 \rho(dy) < \infty,
\]
will be denoted by $\mathcal{P}_2(\C)$.
\end{notation}

\begin{notation}
The set of bounded $\phi \in C^1 (\Reals)$ for which $\phi(y)=0$ for all $y\in\partial\C$ is denoted by $C^1_{0,b}(\C)$. If $\C=\Reals$, we identify $C^1_{0,b}(\C)\equiv C_b(\Reals)$. \\
The set of all bounded  $\phi \in C^1 ([0,\infty)\times\Reals)$ for which $\phi(t,\cdot)\in C^1_{0,b}(\C)$ for all $t\geq0$ is denoted by $C^1_{0,b}([0,\infty)\times\C)$.
\end{notation}

\begin{notation}
The set of bounded $\phi \in C^1 (\Reals)$ for which $\phi'(y)=0$ for all $y\in\partial\C$ is denoted by $C^1_{\nu,b}(\C)$. If $\C=\Reals$, we identify $C^1_{\nu,b}(\C)\equiv C_b(\Reals)$. \\
The set of all bounded  $\phi \in C^1 ([0,\infty)\times\Reals)$ for which $\phi(t,\cdot)\in C^1_{\nu,b}(\C)$ for all $t\geq0$ is denoted by $C^1_{\nu,b}([0,\infty)\times\C)$.
\end{notation}

\begin{definition}\label{bdry-solution}
A sequence $\{\rho_n\}_{n \in \mathbb{N}} \subset \mathcal{P}(\Reals)$ converges \emph{narrowly} to $\rho$ if
\[
\intR g d\rho_n \xrightarrow[n \rightarrow \infty]{} \intR g d\rho
\]
for each $g \in C_b(\Reals)$, where $C_b(\Reals)$ is the set of continuous, bounded functions on $\Reals$.
\end{definition}
It is easy to see that  $\{\rho_n\}_{n \in \mathbb{N}} \subset \mathcal{P}(\C)$ implies  $\rho \in\mathcal{P}(\C)$.
\begin{definition}\label{weak-sol-def}
Given $\rho_0 \in \mathcal{P}_2(\C)$, $v_0 \in L^2(\rho_0)$, a \emph{weak solution} to \eqref{PE} on $\C$ which satisfies the initial conditions $\rho(0,\cdot)=\rho_0$ and $v(0,\cdot)\rho(0,\cdot)=v_0\rho_0$ is a pair $(\rho,v)$ consisting of a narrowly continuous $\rho : [0, \infty) \rightarrow \mathcal{P}(\C)$ and a Borel map $v : [0,\infty)\times\C\rightarrow \mathbb{R}$ for which
\begin{enumerate}
 	\item For each $T > 0$,
	\[
	\int_0^T \intR v^2(t,y) \rho(t,dy) dt < \infty.
	\]
	\item For each $\phi \in C^1_{\nu,b}([0,\infty)\times\C)$,
	\[
	\int_0^\infty \intR (\partial_t \phi + v \partial_y \phi)(t,y) \rho(t,dy) dt + \intR \phi(0,y) \rho_0(dy) = 0.
	\]
	\item For each $\phi \in C^1_{0,b}([0,\infty)\times\C)$,
	\[
	\int_0^\infty \intR (v \partial_t \phi + v^2 \partial_y \phi) \rho(t,dy)dt + \intR \phi(0,y) v_0(y) \rho_0(dy) = 0.
	\]
\end{enumerate}
\end{definition}
\begin{definition}\label{OC}
Given a narrowly continuous $\rho : [0, \infty) \rightarrow \mathcal{P}(\C)$ and a Borel map $v : [0, \infty)\times\C\rightarrow\Reals$, we say $(\rho,v)$ satisfies the \emph{Oleinik condition} if for all $t>0$,
\begin{equation}\nonumber\label{eOleinik}\tag{{$e$Oleinik}}
\frac{v(t, y_2) - v(t, y_1)}{y_2 - y_1} \le \frac{1}{t}\mbox{ for all } y_1 < y_2\mbox{  in a connected component of }\C.
\end{equation}
\end{definition}
If we replace the domain $\C$ by $\Reals$, we get a condition on the whole $\Reals$. We note here that this is a stronger condition than what is usually called the Oleinik condition in the literature. Indeed, the standard formulation requires the pertinent inequality be satisfied $\rho(t,\cdot)$-a.e., instead of everywhere. 
 
\begin{definition}\label{SICE}
Let $\rho : [0, \infty) \rightarrow \mathcal{P}(\C)$ be narrowly continuous and let $v : [0, \infty)\times\C\rightarrow \Reals$ be a Borel map. We say $(\rho, v)$ satisfies the \emph{Strong Initial Continuity of Energy condition} (SICE) if $\forall \phi \in C_b(\mathbb{R})$:
\begin{equation}\nonumber\tag{{SICE}}\label{energy-cont}
\int_\mathbb{R} v^2(t,y) \phi(y) \rho(t,dy) \xrightarrow[t \rightarrow 0^+]{} \int_\mathbb{R} v_0^2(y) \phi (y)\rho_0(dy).
\end{equation}
\end{definition}
The main result of \cite{TudorSPS} is:
\begin{theorem}\label{super-main}
Let $\rho_0 \in \mathcal{P}_2(\C)$ and  $v_0\in C_0(\C)$  Then there exists a unique pair $(\rho,v)$ as in Definition \ref{weak-sol-def} which satisfies \eqref{eOleinik}, \eqref{energy-cont} and 
\begin{equation}\nonumber\tag{BC}\label{BCC}
v(t,y)=0\mbox{ for all }t\geq0\mbox{ and all }y\in\partial\C.
\end{equation}
\end{theorem}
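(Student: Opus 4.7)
The plan is to split the argument into existence and uniqueness, with both parts leveraging the characterization of SPS as narrow limits of discrete Sticky Particles trajectories together with the fact that \eqref{eOleinik} and \eqref{energy-cont} pin down the solution uniquely.

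For existence, I would first reduce to the two canonical domains $\C=[0,\infty)$ and $\C=[0,1]$; the general case of a closed $\C\subset\Reals$ then follows by a localization argument on the countably many connected components (under the sticky boundary condition \eqref{BCC} each component is invariant, since any mass reaching $\partial\C$ is frozen there). On each such $\C$, I approximate $\rho_0$ by empirical measures $\rho_0^n=\sum_{i=1}^n m_i^n \delta_{x_i^n}$ converging to $\rho_0$ in $\calP_2(\C)$, with initial velocities $v_0(x_i^n)$ (well defined since $v_0\in C_0(\C)$). I then run the discrete Sticky Particles dynamics with the \emph{sticky boundary rule}: particles travel ballistically until either two of them collide and merge by conservation of momentum, or a particle reaches $\partial\C$, at which point it halts there with zero velocity and absorbs any further incoming mass. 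Each event strictly dissipates kinetic energy, which yields a uniform bound on $\int v_n^2 d\rho_n$ and hence, by standard Wasserstein compactness, a narrow limit $(\rho,v)$ in the sense of the joint measure $v\rho$. Passing the weak formulation to the limit is routine once one notices that the distinct test function classes $C^1_{\nu,b}$ and $C^1_{0,b}$ appearing in Definition \ref{weak-sol-def} are calibrated precisely so that boundary contributions generated by the stopping rule vanish.

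For the Oleinik condition, the discrete Sticky Particles system automatically enjoys the one-sided estimate $v(t,y_2)-v(t,y_1)\le (y_2-y_1)/t$ on each connected component — particles separated at time $t$ by $y_2-y_1$ cannot have a relative velocity exceeding $(y_2-y_1)/t$ without having collided already — and this inequality is preserved in the narrow limit to give \eqref{eOleinik} everywhere (not just $\rho(t,\cdot)$-a.e.), because the continuity of $v_0$ combined with Oleinik is enough to produce a Borel representative that realizes the bound pointwise. The \eqref{energy-cont} condition is a consequence of the strong convergence of the initial discrete momenta $v_0(x_i^n)$ in $L^2(\rho_0^n)$ to $v_0$ in $L^2(\rho_0)$, together with the monotone decay of kinetic energy along the dynamics and lower semicontinuity under narrow convergence, following the SICE-extraction argument of \cite{SuderTudor}.

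For uniqueness, the approach is to reduce to the already-established uniqueness for the full-line problem: given any two SPS solutions $(\rho,v)$, $(\tilde\rho,\tilde v)$ satisfying \eqref{eOleinik}, \eqref{energy-cont}, and \eqref{BCC}, the sticky boundary rule ensures each solution is equivalent (via trivial extension by zero velocity outside $\C$, or via the reflection principle from \cite{TudorSPS}) to a full-line SPS with an explicit initial data, for which uniqueness holds by the results of \cite{SuderTudor} through the Huang--Wang distributional formulation. The main obstacle is verifying that this reflection/extension really is compatible with the \emph{everywhere} Oleinik condition stated only within components of $\C$ rather than across $\partial\C$: one needs to check that the extended solution still satisfies \eqref{eOleinik} globally on $\Reals$, which requires careful bookkeeping at $\partial\C$ using the fact that $v=0$ there. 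Once this step is secured, uniqueness follows and the theorem is proved.
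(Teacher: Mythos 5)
The first thing to note is that this paper does not actually prove Theorem \ref{super-main}: it is quoted as the main result of \cite{TudorSPS}, so there is no in-paper proof to measure your argument against. The closest the present paper comes is Theorem \ref{tildeX-X}, which realizes the solution on $\C$ by taking the full-line Lagrangian flow $X$ of the Cauchy problem and freezing each trajectory the first time it reaches $\partial\C$; your discrete sticky-boundary dynamics is the particle-level version of exactly that construction, and your reduction to the canonical domains $[0,\infty)$ and $[0,1]$ plus gluing matches the strategy the introduction attributes to \cite{TudorSPS}. One caveat on your Oleinik step: on a component with boundary the estimate does not come for free from the interior collision heuristic, because after the velocity is overwritten by $0$ at $\partial\C$ the relevant difference quotient from a boundary point to an interior point is $v(t,y)/y$ (or $-v(t,y)/(1-y)$), and bounding this by $1/t$ requires the sign information $v(t,\cdot)\le 0$ on $(-\infty,0]$ and $v(t,\cdot)\ge 0$ on $[1,\infty)$ that the proof of Theorem \ref{tildeX-X} extracts from the discrete approximation; your sketch skips this.

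The genuine gap is in uniqueness. You propose to reduce to the full-line uniqueness of \cite{SuderTudor}/\cite{HuangWang} by extending a sticky-boundary solution by zero velocity outside $\C$, or by reflection. Neither reduction works as stated. The extension by zero is not a weak solution of \eqref{PE} on all of $\Reals$: mass that sticks to the wall loses momentum there, so the momentum balance in item (3) of Definition \ref{weak-sol-def} fails for test functions that do not vanish on $\partial\C$ --- that is precisely why the definition uses the restricted classes $C^1_{0,b}$ and $C^1_{\nu,b}$ --- and hence the full-line uniqueness theorem cannot be invoked for the extended pair. The reflection principle, as the introduction describes it, converts the problem on $[0,\infty)$ into one on $(-\infty,0]$ and vice versa; it does not convert the boundary problem into a Cauchy problem on $\Reals$, and it does not apply to the two-sided interval $[0,1]$ at all. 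You correctly flag the compatibility of \eqref{eOleinik} across $\partial\C$ as an obstacle, but the deeper issue is that there is no full-line SPS with the same data to compare against in the first place. Closing uniqueness requires an argument intrinsic to the boundary problem (in \cite{TudorSPS} this is done by adapting the scalar-conservation-law / Huang--Wang machinery to the sticky boundary condition), and your sketch does not supply it.
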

A few remarks are in order: first, \eqref{BCC} ensures that ``nothing'' escapes the boundary once it reaches it. The condition seems stronger than necessary, as it may appear that it need not be imposed at every time $t\geq0$ and location $y\notin\mathrm{spt}(\rho_t)$. Secondly, note that uniqueness is to be understood in the following sense: if $(\rho_1,v_1)$ and $(\rho_2,v_2)$ are solutions as in Theorem \ref{super-main}, then $\rho_1(t,\cdot)=\rho_2(t,\cdot)=:\rho(t,\cdot)$ and $v_1(t,\cdot)=v_2(t,\cdot)=:\tilde v(t,\cdot),\ \rho(t,\cdot)$--a.e. for all $t\geq0$. Thus, the boundary condition \eqref{BCC} above only means $\tilde v$ has a Borel representative $v$ (i.e. $v(t,\cdot)=\tilde v(t,\cdot),\ \rho(t,\cdot)$--a.e. for all $t\geq0$) which satisfies \eqref{BCC}. 

\section{Lagrangian description of the SPS on an arbitrary closed set}
In this section we show that the SPS on $\C$ can be characterized fully in terms of the SPS of the Cauchy problem corresponding to the same initial data.

\begin{definition}
Given $\mu \in \mathcal{P}(\mathbb{R})$ and a Borel map $X : \mathbb{R} \rightarrow \mathbb{R}$, the \emph{push forward} $\nu$ of $\mu$ by $X$, denoted $\nu = X_{\#} \mu$ is a measure $\nu \in \mathcal{P}(\mathbb{R})$ defined as $\nu(B) = \mu(X^{-1}(B))$ for all Borel subsets $B$ of $\mathbb{R}$.
\end{definition}

A useful, fundamental result from measure theory, given here without proof, is the following:

\begin{prop}
Given $\mu, \nu \in \mathcal{P}(\mathbb{R})$ and a Borel map $X : \mathbb{R} \rightarrow \mathbb{R}$, 
\[
\nu = X_{\#} \mu \iff \int_\mathbb{R} \phi(X(y)) \mu(dy) = \int_\mathbb{R} \phi(y) \nu(dy), \forall \phi \in C_b(\mathbb{R}).
\]
\end{prop}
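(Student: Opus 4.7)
The plan is to prove the two implications separately, the forward direction being the classical extension-by-density argument for pushforward measures and the reverse direction reducing to the fact that Borel probability measures on $\mathbb{R}$ are determined by their integrals against $C_b(\mathbb{R})$.

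For the ($\Rightarrow$) direction, I will start from the definition $\nu(B)=\mu(X^{-1}(B))$ for every Borel $B$, which is the identity $\int_\Reals \mathbf{1}_B(y)\,\nu(dy)=\int_\Reals \mathbf{1}_B(X(y))\,\mu(dy)$. I then propagate this through the standard ladder: linearity gives it for simple Borel functions; the monotone convergence theorem extends it to all nonnegative Borel functions (taking monotone pointwise limits of simple functions); splitting $\phi=\phi^+-\phi^-$ and invoking boundedness gives it for all bounded Borel functions, and $C_b(\Reals)$ in particular. Note that $\phi\circ X$ is automatically Borel because $\phi$ is continuous and $X$ is Borel, so both integrals make sense.

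For the ($\Leftarrow$) direction, I will \emph{define} $\tilde\nu:=X_{\#}\mu$ (which is a Borel probability measure on $\Reals$ by the usual verification) and apply the forward direction already proved to conclude $\int_\Reals \phi\,d\tilde\nu=\int_\Reals \phi(X(y))\,\mu(dy)=\int_\Reals \phi\,d\nu$ for every $\phi\in C_b(\Reals)$. The claim then reduces to: two Borel probability measures on $\Reals$ that agree on $C_b(\Reals)$ must coincide. I will prove this by approximating indicators of open sets from below: for any open $U\subset\Reals$, set $\phi_n(y):=\min\{1,n\,\mathrm{dist}(y,\Reals\setminus U)\}\in C_b(\Reals)$, so that $0\le\phi_n\uparrow\mathbf{1}_U$. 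Monotone convergence applied to both $\tilde\nu$ and $\nu$ gives $\tilde\nu(U)=\lim_n\int\phi_n\,d\tilde\nu=\lim_n\int\phi_n\,d\nu=\nu(U)$. Since the family of open sets is a $\pi$-system generating the Borel $\sigma$-algebra on $\Reals$, Dynkin's $\pi$-$\lambda$ theorem (applied to the $\lambda$-system $\{B:\tilde\nu(B)=\nu(B)\}$, which contains $\Reals$ and is closed under proper differences and monotone unions since both measures are finite) yields $\tilde\nu=\nu$ on all Borel sets, i.e.\ $\nu=X_{\#}\mu$.

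The only nontrivial step is the last one, but it is entirely standard and really the only thing that needs to be said; the rest is bookkeeping. I expect no genuine obstacle, which is consistent with the authors' decision to state the proposition without proof.
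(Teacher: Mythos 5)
Your proof is correct. Note that the paper deliberately states this proposition without proof (it is introduced as "a useful, fundamental result from measure theory, given here without proof"), so there is no argument of the authors to compare against; your two-step argument --- the standard ladder (indicators, simple functions, monotone convergence, bounded Borel functions) for the forward implication, and the reduction of the converse to the fact that Borel probability measures on $\Reals$ agreeing against all of $C_b(\Reals)$ coincide, established via the continuous approximations $\phi_n(y)=\min\{1,n\,\mathrm{dist}(y,\Reals\setminus U)\}\uparrow\mathbf{1}_U$ for open $U$ together with Dynkin's $\pi$--$\lambda$ theorem --- is exactly the standard proof the authors are implicitly invoking, and every step you give is sound.
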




\begin{definition}\label{cond-expect}
Given $\rho_0 \in \mathcal{P}_2(\mathbb{R})$, $v_0 \in L^2(\rho_0)$, and a Borel map $X : \mathbb{R} \rightarrow \mathbb{R}$, the \emph{conditional expectation of $v_0$ with respect to $\rho_0$ given $X$} is $f\circ X$, where $f \in L^2(X_{\#}\rho_0)$ is unique (guaranteed by the Riesz-Fr\'{e}chet Representation Theorem on the Hilbert space $L^2(X_{\#} \rho_0)$) such that:
\[
\int_\mathbb{R} f(X(y)) \zeta(X(y)) \rho_0(dy) = \int_\mathbb{R} v_0(y) \zeta(X(y)) \rho_0(dy)
\]
for all $\zeta \in L^2(X_{\#} \rho_0)$.
\end{definition}

The conditional expectation of $v_0$ with respect to $\rho_0$ given $X$ is denoted by $\mathbb{E}_{\rho_0} [v_0 | X]$.

In this paper, we make use of the \emph{Sticky Particles Flow Equation with Initial Condition} formulation as in \cite{Hynd2019}:

\begin{equation}\label{SPF-with-IC}\tag{SPF-IC}
	\left\lbrace
	\begin{aligned}
	   \dot{X}(t,\cdot) = \mathbb{E}_{\rho_0} [v_0 | X(t,\cdot)], \text{ a.e. } t \ge 0 \\
	   X(0,\cdot) = \mathrm{id}_\mathbb{R}.\\
	\end{aligned}
	\right.
\end{equation}
If $v_0$ is right-continuous and bounded on $\Reals$, it is proved in \cite{SuderTudor} that \eqref{SPF-with-IC} admits a solution $X$ which is jointly Borel, absolutely continuous as a map $X:[0,\infty)\rightarrow L^2(\rho_0)$, and that there exists a Borel map $v:[0,\infty)\times \mathbb{R}\rightarrow \mathbb{R}$ such that
\begin{equation}\label{cond-expect-form}
    \mathbb{E}_{\rho_0} [v_0 | X(t,\cdot)]=v(t,X(t,\cdot)).
\end{equation} 
Furthermore, if $\rho(t,\cdot):=X(t,\cdot)_\#\rho_0$,
then $(\rho, v)$ is a distributional solution to (PE-IC). It is proved in \cite{SuderTudor} that $v$ admits a Borel representative, still denoted by $v$, which satisfies \eqref{eOleinik} and \eqref{energy-cont}, which yields uniqueness by a remarkable result in \cite{HuangWang}. This solution is termed the {\it sticky particles solution} (or SPS) because it coincides with the Scalar Conservation Law solution, which is obtained from the Sticky Particles paradigm \cite{Brenier1998}. 

\noindent {\bf A modified flow-map and velocity.}
We next show that the SPS on $\C$ given by Theorem \ref{super-main} admits an explicit Lagrangian representation in terms of $X$. More precisely, if $(-\infty,a]$ is a connected component of $\C$, we consider $X$ the Lagrangian map of the SPS to the Cauchy problem corresponding to the initial data $(\rho_0|_{(-\infty,a]},v_0|_{(-\infty,a]})$ and define 
\begin{equation}\label{X-ray-a}
Y(t,y):=X(t,y)\mbox{ if }X(t,y)<a\mbox{ and }Y(s,y)=a\mbox{ for all }s\geq t\mbox{ if }X(t,y)=a.
\end{equation}
Moreover, consider the Borel velocity $v$ constructed in \cite{SuderTudor} (which satisfies the {\it everywhere} Oleinik condition) for this initial data and set  
\begin{equation}\label{v1}
\tilde v(t,y):=v(t,y)\mbox{ if }y<a\mbox{ and }\tilde v(t,a)=0.
\end{equation}
 If, on the other hand, $[b,\infty)$ is a connected component of $\C$, we consider $X$ the Lagrangian map of the SPS to the Cauchy problem corresponding to the initial data $(\rho_0|_{[b,\infty)},v_0|_{[b,\infty)})$ and define 
 \begin{equation}\label{X-ray-b}
Y(t,y):=X(t,y)\mbox{ if }X(t,y)>b\mbox{ and }Y(s,y)=b\mbox{ for all }s\geq t\mbox{ if }X(t,y)=b.
\end{equation}
 \begin{figure}[h]
\scalebox{0.95}
 {\includegraphics[width=\textwidth]{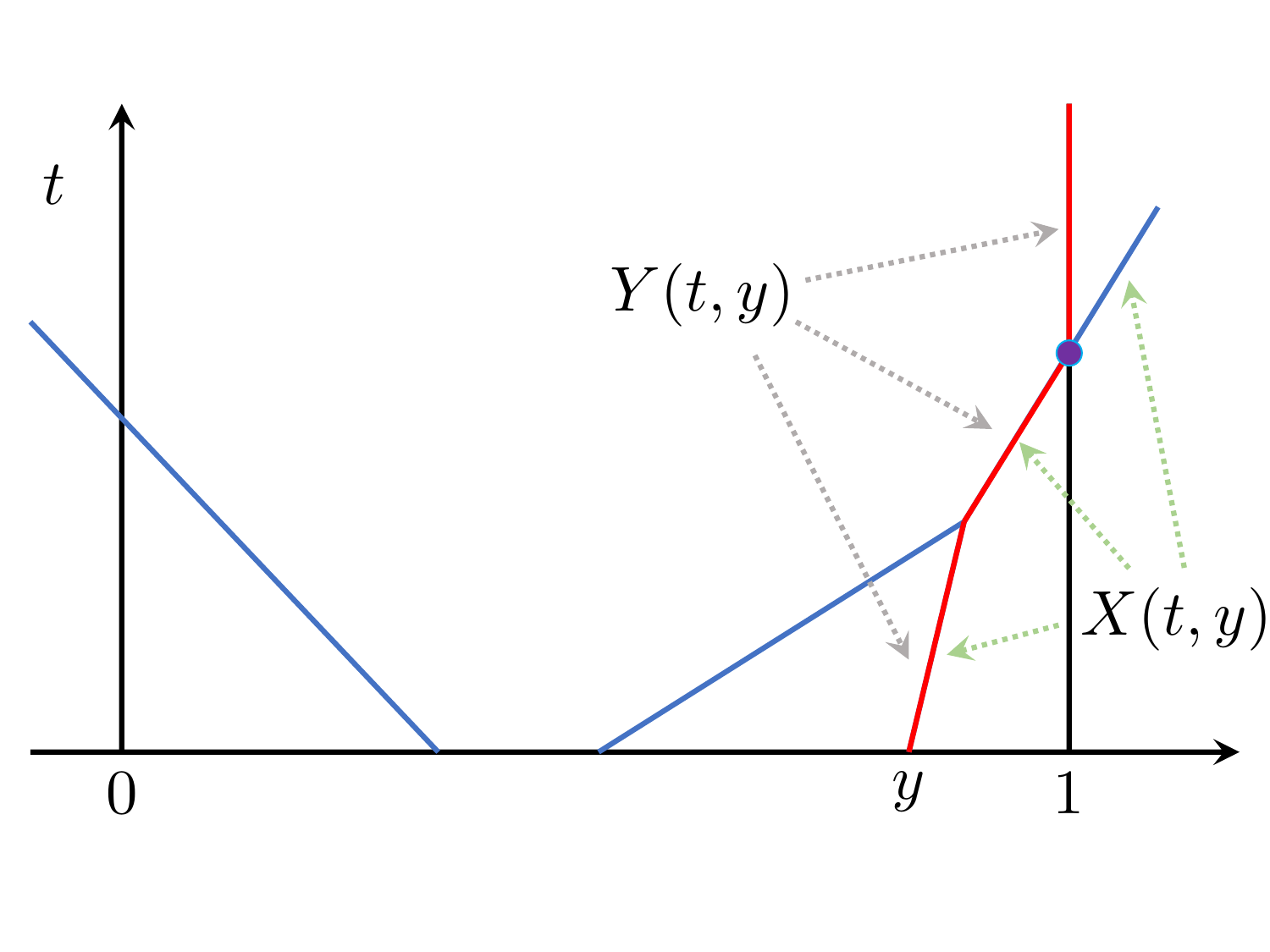}}
  \caption{{Example of trajectories $X$ and $Y$ originating at $y$, which is in the support of a discrete probability measure supported at three points within $(0,1)$. The node on the vertical line $y = 1$ is the time when $X(\cdot·,y)$ first hits this line.}}
  \label{states1}
\end{figure}
Now consider the Borel velocity $v$ constructed in \cite{SuderTudor} for this new initial data and set  
\begin{equation}\label{v2}
\tilde v(t,y):=v(t,y)\mbox{ if }y>b\mbox{ and }\tilde v(t,b)=0.
\end{equation}
Finally, if $[a,b]$ is a nondegenerate connected component of $\C$, we take $X$ the Lagrangian map of the SPS to the Cauchy problem corresponding to the initial data $(\rho_0|_{[a,b]},v_0|_{[a,b]})$ and define 
\begin{eqnarray}
Y(t,y):=X(t,y)\mbox{ if }a<X(t,y)<b,\nonumber\\
 \label{X-ray-ab}Y(s,y)=a\mbox{ for all }s\geq t\mbox{ if }X(t,y)=a,\\
 Y(s,y)=b\mbox{ for all }s\geq t\mbox{ if }X(t,y)=b.\nonumber
\end{eqnarray}
Obviously, if $v$ is the one from \cite{SuderTudor} for the latter initial data, we shall define 
\begin{equation}\label{v3}
\tilde v(t,y):=v(t,y)\mbox{ if }a<y<b\mbox{ and }\tilde v(t,a)=0=\tilde v(t,b).
\end{equation}
Then we have:
\begin{theorem}\label{tildeX-X}
Let $\rho_0 \in \mathcal{P}_2(\C)$ and  $v_0\in C_0(\C)$  and let $Y$ be defined as in \eqref{X-ray-a}, \eqref{X-ray-b}, \eqref{X-ray-ab} and $\tilde v$ as in \eqref{v1}, \eqref{v2}, \eqref{v3}. Set $\tilde\rho(t,\cdot):=Y(t,\cdot)_\#\rho_0$ for all $t\geq0$. Then $(\tilde\rho,\tilde v)$ is the SPS on $\C$ (i.e., as in Theorem \ref{super-main}).
\end{theorem}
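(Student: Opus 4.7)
The plan is to invoke the uniqueness half of Theorem \ref{super-main} by verifying that $(\tilde\rho,\tilde v)$ satisfies each of its four defining properties: the weak form of Definition \ref{weak-sol-def}, the everywhere Oleinik condition \eqref{eOleinik}, the strong initial continuity of energy \eqref{energy-cont}, and the boundary condition \eqref{BCC}. Because the construction \eqref{X-ray-a}--\eqref{v3} is local to each connected component of $\C$ and all four properties decouple across components, it suffices to treat one component at a time; I would focus on the representative case $\C = [a,b]$ (the two semi-infinite cases follow from the same argument with only one wall present).

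The preliminary structural claims are routine. Since $X$ is jointly Borel by \cite{SuderTudor}, the hitting times $\tau_a(y) := \inf\{t\geq 0 : X(t,y) \leq a\}$ and $\tau_b(y) := \inf\{t\geq 0 : X(t,y) \geq b\}$ are Borel, hence so is $Y(t,y) = X(t\wedge \tau_a(y)\wedge \tau_b(y),\,y)$, and $\tilde\rho(t,\cdot) = Y(t,\cdot)_\#\rho_0$ is narrowly continuous in $t$ by dominated convergence. The boundary condition \eqref{BCC} is built into \eqref{v1}--\eqref{v3}. For \eqref{energy-cont}, pushing forward by $Y$ and splitting $\rho_0$ into its restrictions to $\{a\}$, $(a,b)$, and $\{b\}$ shows that the boundary contribution is zero on both sides (since $v_0(a)=v_0(b)=0$ and $\tilde v(t,a)=\tilde v(t,b)=0$) while the interior contribution coincides with that for the Cauchy pair $(\rho,v)$, whose SICE is known from \cite{SuderTudor}.

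The heart of the argument is the weak formulation. The key identity is
\[
\dot Y(t,y) \,=\, \tilde v(t,Y(t,y)) \quad \text{for a.e.\ } t\geq 0 \text{ and every } y\in\Reals,
\]
which holds because for $t<\tau_a(y)\wedge\tau_b(y)$ it reduces to \eqref{SPF-with-IC} combined with \eqref{cond-expect-form} and the interior identification $\tilde v = v$, whereas for $t$ past the hitting time both sides vanish by definition. Pushing forward by $Y$ and invoking the chain rule along trajectories then turns the continuity-equation integral against any $\phi\in C^1_{\nu,b}$ (compactly supported in $t$, by a standard cutoff) into a telescoping integration of $\tfrac{d}{dt}\phi(t,Y(t,y))$ against $\rho_0\otimes dt$, yielding the required identity. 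For the momentum equation with $\phi\in C^1_{0,b}$, the integrand vanishes on each stuck segment because $\phi(t,a)=\phi(t,b)=0$ and $\tilde v(t,a)=\tilde v(t,b)=0$, so the computation reduces to the free portion of each trajectory and matches the momentum weak form already established for $(\rho,v)$ in \cite{SuderTudor}.

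The main obstacle is the everywhere Oleinik condition \eqref{eOleinik} for $\tilde v$ on $[a,b]$. Between interior points $a<y_1<y_2<b$ the bound is immediate from the everywhere Oleinik for $v$. The delicate boundary case $y_1 = a$ reduces, via the decomposition
\[
\frac{\tilde v(t,y_2)-\tilde v(t,a)}{y_2-a} \,=\, \frac{v(t,y_2)-v(t,a)}{y_2-a} + \frac{v(t,a)}{y_2-a} \,\le\, \frac{1}{t} + \frac{v(t,a)}{y_2-a},
\]
to showing $v(t,a)\le 0$ for all $t>0$; a symmetric argument at $y_2=b$ requires $v(t,b)\ge 0$. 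These sign conditions are where one must exploit the specific SPS structure of the Cauchy problem associated to $(\rho_0|_{[a,b]},v_0|_{[a,b]})$: the boundary vanishing $v_0(a)=v_0(b)=0$, the absence of initial mass outside $[a,b]$, and the fact that any mass arriving at $a$ in the Cauchy evolution must come from the right and hence carries nonpositive momentum, so by conservation of momentum $v(t,a)\le 0$ (and symmetrically $v(t,b)\ge 0$). Once these sign estimates are in place the four conditions of Theorem \ref{super-main} are verified, and uniqueness identifies $(\tilde\rho,\tilde v)$ as the SPS on $\C$.
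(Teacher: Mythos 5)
Your overall route is the same as the paper's: verify the conditions of Theorem \ref{super-main} for $(\tilde\rho,\tilde v)$ (trajectory identity $\dot Y(t,y)=\tilde v(t,Y(t,y))$ for the continuity equation, conditional-expectation/pushforward computations for momentum and \eqref{energy-cont}, boundary condition by construction), and then reduce the boundary case of \eqref{eOleinik} to the sign conditions $v(t,a)\le 0$ and $v(t,b)\ge 0$ for the velocity of the Cauchy SPS, exactly as in \eqref{veloc-ineq}. The genuine gap is in how you justify those sign conditions, which are in fact the only nontrivial point of the whole proof. Saying that mass arriving at $a$ ``comes from the right and hence carries nonpositive momentum, so by conservation of momentum $v(t,a)\le 0$'' assumes what has to be shown: the velocity of the mass sitting at $a$ at time $t$ is the $\rho_0$-average of the \emph{initial} velocities of the particles that have coalesced there, and the mere fact that these particles started to the right of $a$ does not make that average nonpositive. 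One must couple the averaged velocity to the net displacement: in the discrete case this is the identity that a cluster's position at time $t$ equals the mass average of $x_i+tv_i$ over its constituents, so a cluster located at $y\le a$ whose constituents started in $[a,b]$ must have averaged velocity $\le 0$. The paper proves the sign conditions exactly this way: it approximates $\rho_0$ by discrete measures supported in $[a,b]$, observes $v_n\le 0$ on $(-\infty,a]$ and $v_n\ge 0$ on $[b,\infty)$, and passes to the limit using the stability of SPS (narrow convergence of $v_n\rho_n$ to $v\rho$, cf.\ \cite{SuderTudor}). Without this approximation-plus-stability step (or an equivalent tool such as the Natile--Savar\'e projection characterization of \cite{Natile2009}), your Oleinik verification does not close.

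A secondary point: for the momentum equation you propose to ``match'' against the momentum weak form already known for the Cauchy pair $(\rho,v)$, but that weak form is tested with $C^1_b$ functions on all of $\Reals$, the measure $\rho(t,\cdot)$ may charge $\Reals\setminus[a,b]$ for $t>0$, and the zero extension of $\phi\in C^1_{0,b}([0,\infty)\times[a,b])$ is only Lipschitz across the walls, so this identification needs an argument. The paper sidesteps it by computing directly with the conditional-expectation identity $\int\varphi(X(t,\cdot))\,v(t,X(t,\cdot))\,d\rho_0=\int\varphi(X(t,\cdot))\,v_0\,d\rho_0$ for $\varphi$ supported in $[a,b]$, which also yields \eqref{energy-cont} in one line; your splitting of $\rho_0$ by initial position for the SICE check likewise needs a small uniform-velocity-bound argument near the walls. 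These are repairable, but the endpoint sign condition is the step where a real idea is missing.
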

\begin{proof} It suffices to prove this theorem in the case where $\C$ is connected. The most delicate case is $\C=[a,b]$ for some real numbers $a<b$ so we shall focus on this one; it will be clear to the reader what modifications are required in order to handle the case of a closed ray. 

Without loss of generality, let us take $a=0,\ b=1$. If $\rho_0(\{0\})>0$, then $Y(t,0)=0$ for all $t\geq0$, whereas $Y(t,1)=1$ for all $t\geq0$ if $\rho_0(\{1\})>0$. So let us assume $\rho_0$ gives mass to neither 0 nor 1. Then there is a Borel set $B\subset(0,1)$ such that $(0,1)\setminus B$ is $\rho_0$--negligible and $X(\cdot,y)\in W_{loc}^{1,\infty}([0,\infty))$ for all $y\in B$; moreover, the weak time derivative of $X$ is $v(\cdot,X)$, i.e.
$$X(t,y)-X(s,y)=\int_s^tv(\tau, X(\tau,y))d\tau\mbox{ for all }y\in B\mbox{ and all }0\leq s<t.$$
If $0<X(t,y)<1$, then $0<X(\tau,y)<1$ for all $0\leq\tau\leq t$ and so $Y(\tau,y)= X(\tau,y)$ and $\tilde v(\tau,Y(\tau,y))=v(\tau,X(\tau,y))$ for all $0\leq\tau\leq t$. Thus,
\begin{equation}\label{X-Xdot}
Y(t,y)-Y(s,y)=\int_s^t\tilde v(\tau,Y(\tau,y))d\tau\mbox{ for all }y\in B\mbox{ and all }0\leq s<t.
\end{equation}
If $X(t,y)=0$, then there exists $0<t_0\leq t$ such that $X(\tau,y)>0$ for all $0<\tau<t_0$ and $X(t_0,y)=0$. This means $Y(\tau,y)=0$ for all $t_0\leq\tau\leq t$. If $s<t_0$, we have $1>Y(s,y)=X(s,y)>0$, so
$$Y(t,y)-Y(s,y)=X(t_0,y)-X(s,y)=\int_s^{t_0}v(\tau,X(\tau,y))d\tau=\int_s^{t}\tilde v(\tau,Y(\tau,y))d\tau,$$
since $\tilde v(\tau,Y(\tau,y))=0$ for $t_0\leq\tau\leq t$. A similar argument applies if $X(t,y)=1$, so \eqref{X-Xdot} holds for all $y\in B$  and all $0\leq s<t$.
This is equivalent to $(\tilde\rho,\tilde v)$ solving the continuity equation on $\C$.  
Next, take $\varphi\in C(\Reals)$ supported in $[0,1]$ and note that
\begin{eqnarray*}
\int_{\Reals}\varphi(Y(t,y))\tilde v(t,Y(t,y))\rho_0(dy)&=&\int_{\Reals}\varphi(X(t,y))v(t,X(t,y))\rho_0(dy)\\
&=&\int_{\Reals}v_0(y)\varphi(X(t,y))\rho_0(dy)\\
&=&\int_{\Reals}v_0(y)\varphi(Y(t,y))\rho_0(dy).
\end{eqnarray*}
This implies 
$$\int_{\Reals}\varphi(y)\tilde v(t,y)\tilde\rho(t,dy)=\int_{\Reals}v_0(y)\varphi(Y(t,y))\rho_0(dy),$$
so the momentum equation holds (as in Definition \ref{weak-sol-def}). 

Likewise, if $\varphi\in C(\Reals)$ is supported in $[0,1]$, we have 
\begin{eqnarray*}
\int_{\Reals}\varphi(Y(t,y))\tilde v^2(t,Y(t,y))\rho_0(dy)&=&\int_{\Reals}\varphi(X(t,y))v^2(t, X(t,y))\rho_0(dy)\\
&=&\int_{\Reals}v_0(y)\varphi( X(t,y))v(t,X(t,y))\rho_0(dy)\\
&=&\int_{\Reals}v_0(y)\varphi(Y(t,y))\tilde v(t,Y(t,y))\rho_0(dy).
\end{eqnarray*}
It is not hard to show that the latter converges to $\int_{\Reals}v_0^2(y)\varphi(y)\rho_0(dy)$, so \eqref{energy-cont} is satisfied.

Since $\rho_0$ and $v_0$ are supported in $[0,1]$, if we approximate $\rho_0$ by averages of Dirac delta's $\rho_{0,n}$ supported in $[0,1]$, it is obvious that $v_n\leq0$ on $[0,\infty)\times(-\infty,0]$ while $v_n\geq0$ on $[0,\infty)\times[1,\infty)$. Due to the stability of SPS (i.e. the narrow convergence of $v_n\rho_n$ to $v\rho$); see, e.g., \cite{SuderTudor}, we conclude $v\leq0$ on $[0,\infty)\times(-\infty,0]$ while $v\geq0$ on $[0,\infty)\times[1,\infty)$. Since $v$ satisfies the everywhere Oleinik condition, we get, in particular  
$$\frac{v(t,y_2)-v(t,y_1)}{y_2-y_1}\leq\frac{1}{t}\mbox{ for all }0\leq y_1<y_2\leq 1\mbox{ and all }t>0.$$
But $v(t,0)\leq 0$ while $v(t,1)\geq 0$, so the above inequality further implies 
\begin{equation}\label{veloc-ineq}
\frac{-v(t,y_1)}{1-y_1}\leq\frac{1}{t}\mbox{ and }\frac{v(t,y_2)}{y_2}\leq\frac{1}{t}\mbox{ for all }0\leq y_1<y_2\leq 1\mbox{ and all }t>0.
\end{equation}
Thus, $\tilde v$ defined in \eqref{v1}, \eqref{v2}, \eqref{v3} satisfies \eqref{eOleinik}. 
\end{proof}
Next let 
$$\mathcal{I}:=\{y\in B\,:\, 0<Y(t,y)\mbox{ for all }t\geq0\}$$
and
$$\mathcal{S}:=\{y\in B\,:\, Y(t,y)<1\mbox{ for all }t\geq0\}.$$
Note that $\mathcal{I}=\emptyset$ means that all mass concentrates at $0$ as $t\rightarrow\infty$, whereas $\mathcal{S}=\emptyset$ means all mass concentrates at $1$ as $t\rightarrow\infty$. Therefore, the interesting case is $\mathcal{I}\neq\emptyset\neq\mathcal{S}$. If $\alpha:=\inf\mathcal{I}$ and $\beta:=\sup\mathcal{S}$, in the latter case we must have $0\leq\alpha\leq\beta\leq1$. If $0=\alpha=\beta$, we infer $\mathcal{I}=\emptyset$, a contradiction; if $\alpha=\beta=1$, we infer $\mathcal{S}=\emptyset$, yet another contradiction. This means that if $\alpha=\beta$, then $0<\alpha<1$ and the trajectories starting in $(0,\alpha)$ concentrate asymptotically at 0, while the ones starting in $(\beta,1)$ concentrate asymptotically at 1. If $\alpha\in B$, then either $Y(t_0,\alpha)=0$ or $Y(t_0,\alpha)=1$ for some $t_0>0$, or $Y(t,\alpha)=\alpha$ for all $t\geq0$. Finally, if $\alpha<\beta$, we have that all mass originating in $(0,\alpha)$ concentrates asymptotically at 0, while the mass originating in $(\beta,1)$ does so at 1. 

If $\alpha\in B$, then either $\alpha\in\mathcal{I}$ or $\alpha\in B\setminus\mathcal{I}$. If the former, then $Y(t,\alpha)\geq\alpha$ for all $t\geq0$; if the latter, then there exists a minimal $t_0>0$ such that $Y(t,\alpha)=0$ for all $t\geq t_0$.  We reach similar conclusions if $\beta\in B$. Finally, if $y\in B\cap(\alpha,\beta)$, then $\alpha<Y(t,y)<\beta$ for all $t\geq0$. In other words, $Y(t,(\alpha,\beta))\subset(\alpha,\beta)$ for all $t\geq0$. From the definition of $Y$, we also deduce $Y=X$ on $[0,\infty)\times(\alpha,\beta)$. 
This leads to the realization that the study of the asymptotic behavior of the SPS on a nondegenerate compact interval $[a,b]$ (which includes the support of $\rho_0$) can be reduced to the study of the SPS on $\Reals$ for the same $\rho_0$ and initial velocity extended by zero outside $[a,b]$, {\it under the assumption} that $X(t,[a,b])\subset[a,b]$ for all $t\geq0$.  
 \begin{figure}[h]
\scalebox{0.90}
 {\includegraphics[width=\textwidth]{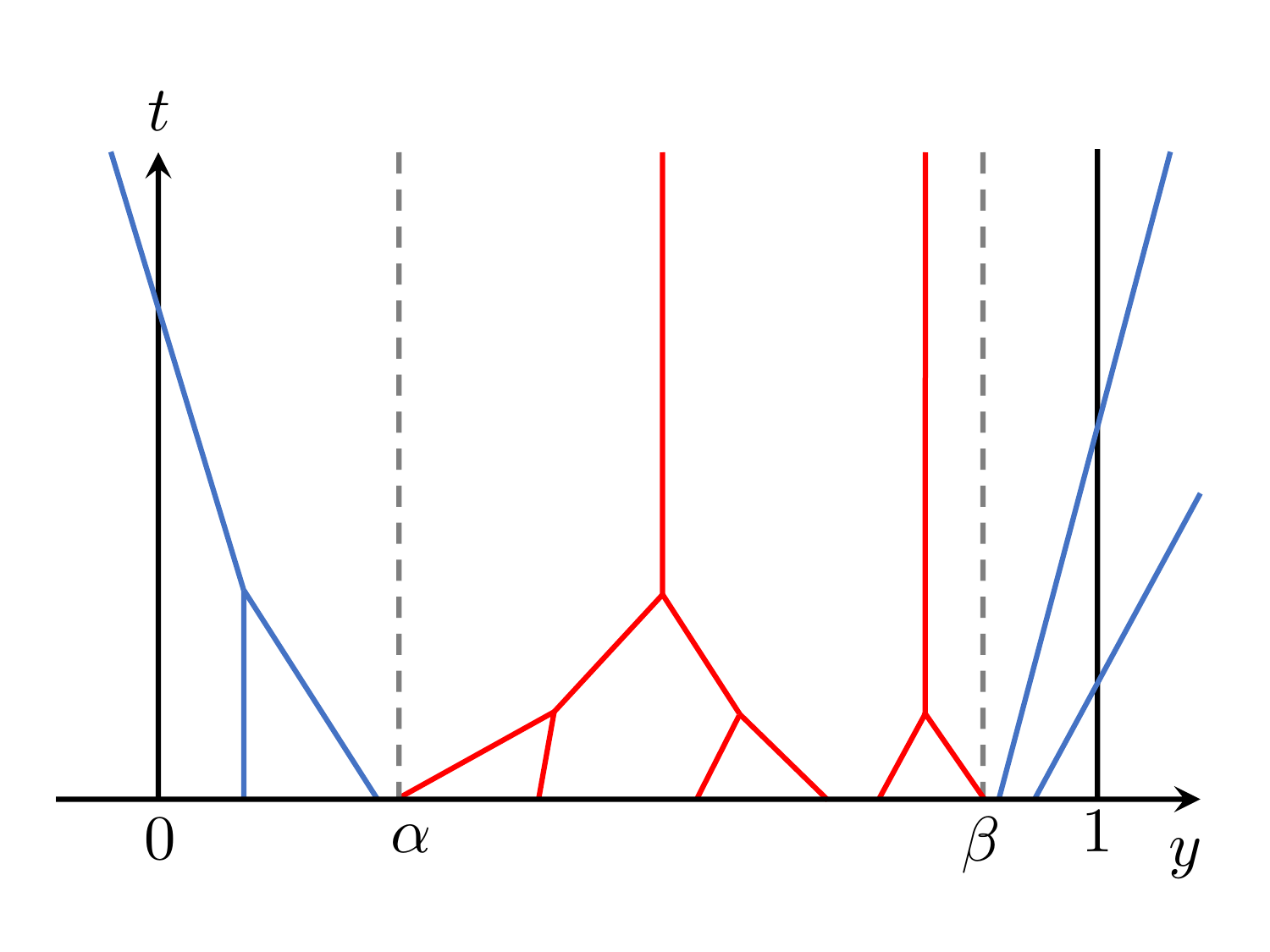}}
  \caption{{This is an example of the sticky particles evolution in which particles that start out in the interval $[\alpha,\beta]$ remain confined to this interval for all later times.}}
  \label{states3}
\end{figure}

To further reformulate and to preface the next section, the question now is: if the sticky particles flow originating from a (probability) distribution supported in a compact interval is non-expanding (i.e. $X(t,[a,b])\subset[a,b]$ for all $t\geq0$), does the flow map $t\rightarrow X(t,\cdot)$ have a limit as $t\rightarrow\infty$?   

\section{Asymptotic behavior of SPS confined to compact intervals}
Here we start from the obvious connection between the SPS for the Cauchy problem and the semigoup solutions constructed by Natile and Savare \cite{Natile2009}. First, we remind the reader of a result from \cite{SuderTudor} which shows that $X$ can be chosen such that $N(t,\cdot):=X(t,\cdot)\circ N_0$, where $N(t,\cdot)$ is the right continuous optimal map pushing the Lebesgue measure restricted to $[0,1]$ forward to $\rho(t,\cdot)$ and $N_0:=N(0,\cdot)$. Let $\mathcal{K}$ be the set of nondecreasing functions in  $L^2(0,1)$. Then $\mathcal{K}$ is a closed, convex cone. Without loss of generality, we may assume the elements of $\mathcal{K}$ to be right continuous (we can single out a unique right-continuous representative since these are all monotone functions). In \cite{Natile2009} it was established that 
$$N(t,\cdot)=\mathrm{proj}_{\mathcal{K}}(N_0+tV_0),\mbox{ where }V_0:=v_0\circ N_0$$ for all solutions arising as limits of discrete SPS, so it holds for the SPS when $v_0$ is continuous and bounded (see \cite{SuderTudor}). The contraction and positive homogeneity properties of the projection onto a closed convex cone $\mathcal{K}$ in any Hilbert space $H$ reveal 
$$\|\mathrm{proj}_{\mathcal{K}}(u_0+tw_0)-t\mathrm{proj}_{\mathcal{K}}w_0\|\leq \|u_0\|\mbox{ for all }u_0,\ w_0\in H,\ t\geq0,$$
which implies 
\begin{equation}\label{generic-decay}
\Big\|\frac{N(t,\cdot)}{t}-\mathrm{proj}_{\mathcal{K}}V_0\Big\|_2\leq\frac{\|N_0\|_2}{t}\mbox{ for all }t>0,
\end{equation}
where $\|\cdot\|_p$ denotes the $L^p$-norm on $[0,1]$.
If $\rho_0=\delta_{1/2}$ and $v_0(1/2)=1$ we have $N_0\equiv1/2$, $V_0\equiv1\equiv\mathrm{proj}_{\mathcal{K}}V_0$, $N(t,\cdot)=1/2+t$ and we get equality in \eqref{generic-decay} for all $t>0$. Thus, \eqref{generic-decay} is optimal. Ultimately, the same example also shows that we cannot expect a better decay estimate which will guarantee that $\|N(t,\cdot)-t\mathrm{proj}_{\mathcal{K}}V_0\|_2$ converges to zero.

And yet, while  \eqref{generic-decay} spells out the optimal decay for generic SP solutions, it does not offer any insight into what happens if the SP evolution is confined to a compact interval. Whereas $\mathrm{spt}(\rho(t,\cdot))\subset[0,1]$  is equivalent to $N(t,[0,1])\subset[0,1]$ for all $t\geq0$, we can only infer from \eqref{generic-decay} that in this case $N(t,\cdot)/t$ decays uniformly to zero and 
\begin{equation}\label{null-proj}
\mathrm{proj}_{\mathcal{K}}V_0\equiv0.
\end{equation}
 The latter can be regarded as a necessary condition for the SP evolution to remain confined to $[0,1]$. In fact, 
\begin{equation}\label{unif-L1}
\sup_{t\geq0}\int_\Reals|y|\rho(t,dy)<\infty\ (\mbox{i.e. }\{N(t,\cdot)\}_{t\geq0}\mbox{ is bounded in }L^1(0,1))
\end{equation}
 is sufficient to deduce \eqref{null-proj}. 
 \begin{lemma}\label{V0F0}
Let $\rho_0\in\mathcal{P}_2(\Reals)$, $v_0\in C_b(\Reals)$ and define
$$F_0(x):=\int_0^xV_0(z)dz\mbox{ for all }0\leq x\leq 1.$$
Then \eqref{null-proj} is equivalent to 
$$F_0(x)\geq F_0(1)=0\mbox{ for all }0\leq x\leq 1.$$
\end{lemma}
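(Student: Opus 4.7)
The plan is to reduce the claim to the standard polar cone characterization: for a closed convex cone $\mathcal{K}$ in a Hilbert space $H$, $\mathrm{proj}_{\mathcal{K}}V_0=0$ is equivalent to $V_0\in\mathcal{K}^\circ$, i.e.\ $\langle V_0,Q\rangle_{L^2(0,1)}\le 0$ for every $Q\in\mathcal{K}$. So the lemma reduces to showing that this inequality for all nondecreasing $Q\in L^2(0,1)$ is equivalent to $F_0\ge 0$ on $[0,1]$ together with $F_0(1)=0$.

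For necessity, I would plug in test functions. First, the constants $Q\equiv 1$ and $Q\equiv -1$ lie in $\mathcal{K}$ and yield $\pm F_0(1)\le 0$, forcing $F_0(1)=0$. Next, for each $a\in(0,1)$ the step function $Q_a:=-\mathbf{1}_{[0,a)}$ is nondecreasing (jumping from $-1$ to $0$ at $a$) and belongs to $L^2(0,1)$; testing against it yields $-F_0(a)\le 0$, i.e.\ $F_0(a)\ge 0$. Continuity of $F_0$ (Lipschitz, since $V_0$ is bounded because $v_0\in C_b(\Reals)$) takes care of $a\in\{0,1\}$.

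For sufficiency, assume $F_0\ge 0$ on $[0,1]$ and $F_0(1)=0$; I want $\int_0^1 V_0 Q\,dx\le 0$ for every $Q\in\mathcal{K}$. First reparametrize $Q$ by $Q_\varepsilon(x):=Q((1-2\varepsilon)x+\varepsilon)$ to produce bounded nondecreasing approximants with $Q_\varepsilon\to Q$ in $L^2(0,1)$ (this is needed because monotone $L^2$ functions, e.g.\ $-x^{-1/4}$, may blow up at the endpoints). Then mollify each $Q_\varepsilon$ to obtain smooth nondecreasing $Q_{\varepsilon,n}$. For each such smooth approximant, integration by parts gives
\[
\int_0^1 V_0(x) Q_{\varepsilon,n}(x)\,dx = F_0(1)Q_{\varepsilon,n}(1)-\int_0^1 F_0(x) Q_{\varepsilon,n}'(x)\,dx = -\int_0^1 F_0 \, Q_{\varepsilon,n}'\,dx\le 0,
\]
since $F_0\ge 0$ and $Q_{\varepsilon,n}'\ge 0$. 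Boundedness of $V_0$ justifies passing to the limit first in $n$ and then in $\varepsilon$ to conclude $\int_0^1 V_0 Q\,dx\le 0$.

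The only technical wrinkle is the approximation in the sufficiency step: one must handle the potential unboundedness of monotone $L^2$ functions at the endpoints of $(0,1)$ before mollification can be applied, which is precisely what the reparametrization step addresses. Beyond that, everything reduces to the elementary cone/polar cone duality and a one-line integration by parts.
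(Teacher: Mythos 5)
Your proof is correct, but it takes a different route from the paper's. The paper invokes the Natile--Savar\'e characterization of $\mathrm{proj}_{\mathcal{K}}$ on the cone of nondecreasing $L^2$ functions: $\mathrm{proj}_{\mathcal{K}}V_0=\hat F_0'$, where $\hat F_0$ is the convex envelope of $F_0$ on $[0,1]$ pinned at the endpoint values; since $F_0(0)=0$, the vanishing of the projection is then immediately equivalent to $\hat F_0\equiv 0$, i.e.\ to $F_0\ge F_0(1)=0$. You instead reduce to the polar-cone criterion $\mathrm{proj}_{\mathcal{K}}V_0=0\iff \langle V_0,Q\rangle\le 0$ for all $Q\in\mathcal{K}$ (which follows from the variational characterization of the projection, since $0\in\mathcal{K}$), and then verify the equivalence by hand: constants and the steps $-\mathbf{1}_{[0,a)}$ give necessity, and integration by parts against smoothed, truncated nondecreasing test functions gives sufficiency. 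Your version is more elementary and self-contained --- it needs only the defining inequality of the projection rather than the convex-envelope formula --- at the price of the approximation bookkeeping in the sufficiency step (which you handle adequately; note that simple truncation of $Q$ at levels $\pm M$ would serve in place of your reparametrization, since truncation preserves monotonicity and converges in $L^2$ by dominated convergence, and $\langle V_0,\cdot\rangle$ is $L^2$-continuous because $V_0$ is bounded). The paper's version is a one-line deduction but leans on a cited structural result. Both are valid; no gap.
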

\begin{proof}
We know (see, e.g., \cite{Natile2009}) that $\mathrm{proj}_{\mathcal{K}}V_0=\hat F_0'$, where $\hat F_0$ is the convex envelope (Lipschitz, because $V_0\in L^\infty(0,1)$; again, see \cite{Natile2009}) of $F_0$ over $[0,1]$. In other words, $\hat F_0$ the largest convex function pointwise smaller than $F_0$ over $[0,1]$ and with $\hat F_0(0)=F_0(0)$ and $F_0(1)=\hat F_0(1)$. Since $F_0(0)=0$, \eqref{null-proj} is equivalent to $\hat F_0\equiv0$ on $[0,1]$, which is the same as $F_0(x)\geq F_0(1)=0\mbox{ for all }0\leq x\leq 1.$
\end{proof}
We are now ready to prove:
\begin{prop}\label{prop-comp-conf}
Let $\rho_0\in\mathcal{P}_2(\Reals)$  and let $v_0\in C_b(\Reals)$ be such that the SPS $(\rho,v)$ corresponding to the initial data $(\rho_0,v_0)$ satisfies
$$\mbox{ for each }x\in[0,1],\ \inf_{t\geq0}\int_0^xN(t,z)dz>-\infty$$
and
$$\sup_{t\geq0}\int_0^1N(t,x)dx<\infty,\mbox{ i.e. }\sup_{t\geq0}\int_\Reals y\rho(t,dy)<\infty.$$
Then \eqref{null-proj} holds.
\end{prop}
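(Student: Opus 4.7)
The plan is to use the convex-envelope representation of $N(t,\cdot)$ to convert the two integral assumptions into pointwise sign information about $F_0$, then quote Lemma~\ref{V0F0}. Write $g(x):=\int_0^x N_0(z)\,dz$ and $G_t(x):=g(x)+tF_0(x)$ on $[0,1]$. As recalled in the proof of Lemma~\ref{V0F0}, the identity $N(t,\cdot)=\mathrm{proj}_{\mathcal{K}}(N_0+tV_0)$ combined with the characterization of $\mathrm{proj}_{\mathcal{K}}$ from \cite{Natile2009} identifies $N(t,\cdot)$ with the (a.e.) derivative of the convex envelope $\widehat{G_t}$ of $G_t$ on $[0,1]$. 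Since $G_t$ is Lipschitz (as $V_0$ is bounded), so is $\widehat{G_t}$, and $\widehat{G_t}$ coincides with $G_t$ at the endpoints of $[0,1]$; integrating therefore yields
\begin{equation}\label{plan-key}
\int_0^x N(t,z)\,dz=\widehat{G_t}(x),\qquad x\in[0,1],
\end{equation}
with $\widehat{G_t}(0)=0$ and $\widehat{G_t}(1)=g(1)+tF_0(1)$.

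First, I would specialize \eqref{plan-key} to $x=1$: the first hypothesis applied at $x=1$ gives $\inf_{t\ge 0}[g(1)+tF_0(1)]>-\infty$, forcing $F_0(1)\ge 0$, while the second hypothesis gives $\sup_{t\ge 0}[g(1)+tF_0(1)]<\infty$, forcing $F_0(1)\le 0$; hence $F_0(1)=0$. Next, for $x\in[0,1)$, the envelope inequality $\widehat{G_t}(x)\le G_t(x)=g(x)+tF_0(x)$ together with $\inf_{t\ge 0}\widehat{G_t}(x)>-\infty$ rules out $F_0(x)<0$, since otherwise $G_t(x)\to-\infty$ as $t\to\infty$. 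Thus $F_0(x)\ge 0=F_0(1)$ for every $x\in[0,1]$, and Lemma~\ref{V0F0} delivers \eqref{null-proj}.

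The only nontrivial input is the representation \eqref{plan-key}, and in particular the endpoint equalities $\widehat{G_t}(0)=G_t(0)$ and $\widehat{G_t}(1)=G_t(1)$; these rest on the standard fact that the convex envelope of a bounded continuous function on a compact interval agrees with the function at the endpoints. Once that is granted the argument is just a one-sided comparison between $G_t$ and its convex envelope, and in particular does not need any refined information about the flow map $t\mapsto N(t,\cdot)$ itself.
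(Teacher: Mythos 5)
Your argument is correct, and it is in essence the paper's proof written with a different (equivalent) characterization of the projection. The paper obtains the two key facts directly from the cone--projection variational inequality \eqref{gener-ineq}: testing against the constants $\pm 1\in\mathcal{K}$ gives the exact identity $\int_0^1N(t,x)dx=\int_0^1N_0(x)dx+t F_0(1)$, whose boundedness above and below (the two hypotheses at $x=1$) forces $F_0(1)=0$, and testing against $\kappa=-1_{[0,x]}$ gives $\int_0^xN(t,z)dz\leq \int_0^xN_0(z)dz+tF_0(x)$, whose lower bound forces $F_0(x)\geq 0$; Lemma \ref{V0F0} then concludes. You instead invoke the Natile--Savar\'e envelope representation $\int_0^x N(t,z)dz=\widehat{G_t}(x)$ with endpoint equality, which the paper itself quotes later in the proof of Proposition \ref{spacetime}; your endpoint step at $x=1$ reproduces the paper's use of $\kappa=\pm1$, and the inequality $\widehat{G_t}\leq G_t$ reproduces the paper's use of $\kappa=-1_{[0,x]}$. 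So the logical skeleton ($F_0(1)=0$, $F_0\geq 0$, then Lemma \ref{V0F0}) is identical; the choice between the variational inequality and the envelope formulation is a matter of which form of \cite{Natile2009} one prefers to cite. One small inaccuracy: under the hypothesis $\rho_0\in\mathcal{P}_2(\Reals)$ the map $N_0$ is only in $L^2(0,1)$, so $G_t$ is absolutely continuous but not necessarily Lipschitz; this is harmless, since the envelope representation, the endpoint equalities and the comparison $\widehat{G_t}\leq G_t$ are all you actually use and they hold in this $L^2$ setting.
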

\begin{proof}
Another generic property of the projection onto closed convex cones is 
\begin{equation}\label{gener-ineq}
\langle u_0+tw_0-\mathrm{proj}_{\mathcal{K}}(u_0+tw_0),\kappa\rangle\leq0\mbox{ for all }t\geq0\mbox{ and }u_0,\ w_0\in H,\ \kappa\in \mathcal{K}.
\end{equation}
Since all constant functions belong to our particular $\mathcal{K}$ (thus, $f\equiv1$ and $g\equiv-1$, in particular), we infer 
$$\int_0^1N_0(x)dx+t\int_0^1V_0(x)dx=\int_0^1N(t,x)dx\mbox{ for all }t\geq0.$$
The hypotheses imply
\begin{equation}\label{generic-conserve1}
\int_0^1V_0(x)dx=0,\mbox { i.e. }\int_\Reals v_0(y)\rho_0(dy)=0 
\end{equation}
or, equivalently,
\begin{equation}\label{generic-conserve2}
\int_0^1N(t,x)dx=\int_0^1N_0(x)dx,\mbox{ i.e. }\int_\Reals y\rho_0(dy)=\int_\Reals y\rho(t,dy)\mbox{ for all }t>0.
\end{equation}
Now take $\kappa:=-1_{[0,x]}$ for some arbitrary $0<x\leq1$ in \eqref{gener-ineq} to deduce 
$$\int_0^x N_0(z)dz+t\int_0^xV_0(z)dz\geq\int_0^xN(t,z)dz\mbox{ for all }t\geq0$$ 
so the first hypothesis implies 
$F_0(x)\geq0\mbox{ for all }x\in[0,1].$
The conclusion now follows from \eqref{generic-conserve1} and Lemma \ref{V0F0}.
\end{proof}
\begin{remark}\label{rmk-proj0}
It is not difficult to see that Sticky Particles evolutions confined to $[0,1]$, i.e. $\mathrm{spt}(\rho(t,\cdot))\subset[0,1]$ for all $t\geq0$, satisfy the hypotheses of Proposition \ref{prop-comp-conf}.
\end{remark}
Let us now consider $\rho_0\in\mathcal{P}(\Reals)$ with $\mathrm{spt}(\rho_0)\subset[0,1]$, $v_0\in C(\Reals)$ with $v_0(0)=0=v_0(1)$.
\begin{lemma}\label{V0Nt}
The map $\theta:[0,\infty)\rightarrow\Reals$ given by 
$$\theta(t):=\int_\Reals  V_0(x)N(t,x)dx\mbox{ is nondecreasing.}$$ If the SP evolution is confined to a compact interval, then 
$$\lim_{t\rightarrow\infty}\theta(t)=0.$$
\end{lemma}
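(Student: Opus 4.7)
The plan is to exploit the Natile--Savar\'e representation $N(t,\cdot)=\mathrm{proj}_{\mathcal{K}}(N_0+tV_0)$ already recalled in this section, together with the variational inequality characterizing the projection onto a closed convex set in a Hilbert space. Write $u(t):=N_0+tV_0$ and $\pi(t):=N(t,\cdot)$, so $\pi(t)\in\mathcal{K}$ and
\[
\langle u(t)-\pi(t),\kappa-\pi(t)\rangle_{L^2(0,1)}\le 0\quad\text{for every }\kappa\in\mathcal{K}.
\]

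For monotonicity of $\theta$, I would apply this inequality once at time $t$ with the test element $\kappa=\pi(s)$ and once at time $s$ with $\kappa=\pi(t)$, then add. The cross term produces $-\|\pi(t)-\pi(s)\|_2^2\le 0$, while the linear terms collapse to $\langle u(t)-u(s),\pi(t)-\pi(s)\rangle=(t-s)\langle V_0,\pi(t)-\pi(s)\rangle$. Thus $(t-s)(\theta(t)-\theta(s))\ge 0$, which is the desired monotonicity (and is nothing but the fact that $t\mapsto \mathrm{proj}_{\mathcal{K}}(u(t))$ is a monotone map in disguise).

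For the asymptotic statement I would combine three observations, all under the standing assumption that the SP evolution is confined to a compact interval. First, Remark~\ref{rmk-proj0} together with Proposition~\ref{prop-comp-conf} gives $\mathrm{proj}_{\mathcal{K}}V_0\equiv 0$, so \eqref{generic-decay} collapses to the uniform bound $\|\pi(t)\|_2\le\|N_0\|_2$. Second, $\mathrm{proj}_{\mathcal{K}}V_0=0$ is equivalent to $V_0$ lying in the polar cone, i.e.\ $\langle V_0,\kappa\rangle\le 0$ for every $\kappa\in\mathcal{K}$; in particular $\theta(t)\le 0$ for all $t$. Together with the monotonicity just proved, $\theta$ admits a limit $\theta_\infty\in(-\infty,0]$.

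To upgrade this to $\theta_\infty=0$, I would use the cone identity $\langle u(t)-\pi(t),\pi(t)\rangle=0$ (obtained by differentiating $\lambda\mapsto\|u(t)-\lambda\pi(t)\|_2^2$ at $\lambda=1$, which is legitimate since $\lambda\pi(t)\in\mathcal{K}$ for $\lambda\ge 0$). Expanding it yields
\[
t\,\theta(t)=\|\pi(t)\|_2^{\,2}-\langle N_0,\pi(t)\rangle\ge -\|N_0\|_2\|\pi(t)\|_2\ge -\|N_0\|_2^{\,2},
\]
so $t\theta(t)$ is bounded below uniformly in $t$. If $\theta_\infty<0$, then $t\theta(t)\to-\infty$, contradicting this bound; hence $\theta_\infty=0$.

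None of the steps is genuinely hard; the whole argument is a short sequence of Hilbert-space manipulations once the projection representation is in hand. The only piece that requires a moment's care is recognizing that the cone identity $\langle u(t)-\pi(t),\pi(t)\rangle=0$ is precisely what forces $t\theta(t)$ to be bounded, which in turn rules out a strictly negative limit for $\theta$.
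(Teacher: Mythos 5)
Your proof is correct, and it splits naturally into two halves. For the limit, you are doing essentially what the paper does: invoke the cone identity $\langle N_0+tV_0-N(t,\cdot),N(t,\cdot)\rangle=0$ to write $t\theta(t)=\|N(t,\cdot)\|_2^2-\langle N_0,N(t,\cdot)\rangle$, observe that the right-hand side is uniformly bounded in the confined case, and conclude $\theta(t)\to 0$ (your detour through the polar cone to get $\theta\le 0$ is fine but not strictly needed, since the same identity bounds $t\theta(t)$ from above as well, and the paper instead deduces $\theta\le 0$ a posteriori from monotonicity plus the zero limit). For the monotonicity, however, your route is genuinely different: you use only the variational inequality for the projection onto the closed convex cone, testing at times $s$ and $t$ with each other's projections, which is a purely Hilbert-space argument requiring no information about the velocity. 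The paper instead differentiates $\theta$ along the flow, using $\dot N(t,x)=v(t,N(t,x))$ and the conditional-expectation property \eqref{cond-expect-form} to get $\theta'(t)=\int_\Reals v^2(t,y)\,\rho(t,dy)\ge 0$. Your argument is shorter and more robust (it would apply to any curve of the form $\mathrm{proj}_{\mathcal K}(N_0+tV_0)$), but the paper's computation buys more than monotonicity: the identity $\theta'(t)=\|v(t,\cdot)\|^2_{L^2(\rho(t,\cdot))}=|\rho'|^2(t)$ is reused later (e.g.\ in \eqref{derivative-2} and the decay estimates), so it is worth keeping in mind that your proof, while sufficient for the lemma as stated, does not reproduce that quantitative byproduct.
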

\begin{proof} From \cite{SuderTudor} we have $|v(t,y)|\leq\sup|v_0|$ for all $(t,y)\in[0,\infty)\times\Reals$ and $\dot N(t,x)=v(t,N(t,x))$ for a.e. $(t,x)\in[0,\infty)\times[0,1]$.
The map from the statement is locally Lipschitz and its derivative is, according to \eqref{cond-expect-form},
$$\int_0^1 V_0(x)v(t,N(t,x))dx=\int_0^1v^2(t,N(t,x))dx=\int_\Reals v^2(t,y)\rho(t,dy),$$
so that proves its monotonicity.

Yet another property of the projection onto a convex cone is 
\begin{equation}\label{gener-eq}
\langle u_0+tw_0-\mathrm{proj}_{\mathcal{K}}(u_0+tw_0),\mathrm{proj}_{\mathcal{K}}(u_0+tw_0)\rangle=0\mbox{ for all }t\geq0\mbox{ and }u_0,\ w_0\in H.
\end{equation}
In our case, this translates to
\begin{equation}\label{gener-eq-Nt}
\int_0^1N_0(x)N(t,x)dx+t\theta(t)=\int_0^1N^2(t,x)dx.
\end{equation}
In the confined case, $N$ is a bounded map (in time and space), so \eqref{gener-eq-Nt} implies 
\begin{equation}\label{0-0}
\theta(t)\leq0=\lim_{t\rightarrow\infty}\theta(t)\mbox{ for all }t\geq0.
\end{equation}
The inequality is due to the monotonicity of $\theta$.
\end{proof}
We shall prove the following result:
\begin{theorem}\label{main1}
Let $\rho_0 \in \mathcal{P}(\Reals)$ and  $v_0\in C(\Reals)$, both supported in $[0,1]$.  If the SP evolution is confined to $[0,1]$, then there exists a right-continuous, nondecreasing function $N_\infty:[0,1]\rightarrow[0,1]$ such that 
\begin{equation}\label{Nt-cv}
\lim_{t\rightarrow\infty}N(t,x)=N_\infty(x)\mbox{ for a.e. }x\in[0,1].
\end{equation}
\end{theorem}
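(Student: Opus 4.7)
My plan is to use the identification (invoked in the proof of Lemma \ref{V0F0}) of $N(t,\cdot)=\mathrm{proj}_{\mathcal{K}}(N_0+tV_0)$ with the right-derivative on $[0,1]$ of the convex envelope $\widehat{G_t}$ of
\[
G_t(x):=H(x)+tF_0(x),\qquad H(x):=\int_0^x N_0(z)\,dz,
\]
where $\widehat{G_t}$ denotes the largest convex function on $[0,1]$ that is bounded above by $G_t$ and has matching boundary values at $0$ and $1$. By Remark \ref{rmk-proj0}, Proposition \ref{prop-comp-conf}, and Lemma \ref{V0F0}, confinement of the SP evolution to $[0,1]$ forces $F_0\ge 0$ on $[0,1]$ with $F_0(0)=F_0(1)=0$; in particular, $G_t(0)=0$ and $G_t(1)=H(1)\in[0,1]$ are independent of $t$.

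The next step is to establish monotonicity in $t$ and a uniform bound for $\widehat{G_t}$. Since $F_0\ge 0$, the map $t\mapsto G_t(x)$ is nondecreasing at every $x$; because the common endpoint values of $G_t$ are fixed in $t$, the convex-envelope operation respects this pointwise ordering, so $t\mapsto\widehat{G_t}(x)$ is also nondecreasing. Confinement gives $N(t,\cdot)\in[0,1]$, so
\[
\widehat{G_t}(x)=\int_0^x N(t,z)\,dz\in[0,x]\subset[0,1].
\]
Thus $\widehat{G_t}(x)\nearrow\widehat{G_\infty}(x):=\sup_{t\ge 0}\widehat{G_t}(x)$ pointwise on $[0,1]$, and the limit $\widehat{G_\infty}$ is convex (being a pointwise supremum of convex functions) and $1$-Lipschitz, with endpoint values $0$ and $H(1)$.

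Finally, I will transfer the pointwise convergence of primitives to a.e.\ convergence of their (right-)derivatives using the classical Rockafellar-type fact that, when convex functions on an interval converge pointwise to a convex limit, their right-derivatives converge at every differentiability point of the limit. Since $N_\infty:=\widehat{G_\infty}'$ is the right-derivative of a convex function---hence right-continuous and nondecreasing---and takes values in $[0,1]$, it has at most countably many jumps, so $\widehat{G_\infty}$ is Lebesgue-a.e.\ differentiable and
\[
N(t,x)=\widehat{G_t}'(x)\;\longrightarrow\;N_\infty(x)\quad\text{for a.e. } x\in[0,1],
\]
which is the desired conclusion. The main subtlety I expect is checking that the fixed-endpoint constraint is genuinely preserved in the monotone limit (so that $\widehat{G_\infty}$ is really the convex envelope of $\lim_t G_t$ with the correct boundary values) together with the derivative-convergence step; both are however routine consequences of the convex-analytic framework already leveraged in this paper (see \cite{Natile2009} and the proof of Lemma \ref{V0F0}).
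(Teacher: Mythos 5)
Your argument is correct, but it is genuinely different from the one in the paper. The paper fixes two arbitrary sequences $t_n^{(1)},t_n^{(2)}\rightarrow\infty$, extracts pointwise limits $f,g$ by Helly's selection theorem, and then uses the cone-projection inequality \eqref{gener-ineq}, the projection identity \eqref{gener-eq-Nt}, and the fact that $\theta(t)\leq 0$ with $\theta(t)\rightarrow 0$ (Lemma \ref{V0Nt}) to force $\|f-g\|_2\leq0$, so all subsequential limits coincide. You instead work at the level of the primitive $P(t,x)=\int_0^xN(t,z)dz$, using the obstacle/convex-envelope representation from \cite{Natile2009} (the same fact the paper invokes in Proposition \ref{spacetime} and, in a simpler form, in Lemma \ref{V0F0}): confinement gives $F_0\geq0$ with $F_0(0)=F_0(1)=0$ via Remark \ref{rmk-proj0}, Proposition \ref{prop-comp-conf} and Lemma \ref{V0F0}, hence the obstacle $G_t=H+tF_0$ is nondecreasing in $t$ with $t$-independent endpoint values, the envelope operation preserves this order, and the bounded monotone family $\widehat{G_t}$ converges to a convex, $1$-Lipschitz limit whose right-derivative is the desired $N_\infty$; the classical convergence of derivatives of convex functions at differentiability points of the limit then gives \eqref{Nt-cv} (in fact at all but countably many $x$). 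Your route buys the monotonicity of $t\mapsto P(t,x)$ directly from $F_0\geq 0$ -- a fact the paper only obtains afterwards in Corollary \ref{Nt-decrease}, and there by combining concavity (Proposition \ref{spacetime}) with the already-proved existence of the limit -- and it avoids Helly and Lemma \ref{V0Nt} entirely; the paper's Hilbert-space argument, on the other hand, is the one that transfers to the $L^2(\rho_0)$ setting of Section 5 and does not rely on the envelope formula beyond the projection identities. One small remark: the caveat you raise about the fixed-endpoint constraint being preserved in the limit is not actually needed -- the derivative-convergence step only requires that $\widehat{G_\infty}$ be a finite convex function to which the $\widehat{G_t}$ converge pointwise, which is immediate from monotone convergence of a uniformly bounded family of convex functions; you never need to identify $\widehat{G_\infty}$ as the envelope of $\lim_tG_t$.
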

\begin{proof}
Let $\{t^{(1)}_n\}_n$ be an arbitrary sequence of times going to infinity. By Helly's Selection Theorem, there exists a subsequence (not relabelled) and a limiting function $f$, right continuous and nondecreasing such that $N(t^{(1)}_n,\cdot)$ converges to $f$ pointwise Lebesgue a.e. in $[0,1]$. Since all functions involved take values in $[0,1]$, we infer that, in particular, $N(t^{(1)}_n,\cdot)$ converges to $f$ in $L^2(0,1)$. Likewise, we get a limit $g$ for another arbitrary sequence $\{t^{(2)}_n\}_n$ which diverges to infinity.
From \eqref{gener-ineq} we deduce
$$\int_0^1N_0(x)f(x)dx+t_n^{(2)}\int_0^1V_0(x)f(x)dx\leq\int_0^1N(t^{(2)}_n,x)f(x)dx$$
and
$$\int_0^1N_0(x)g(x)dx+t_n^{(1)}\int_0^1V_0(x)g(x)dx\leq\int_0^1N(t^{(1)}_n,x)g(x)dx.$$
From \eqref{0-0} we get
$$\int_0^1V_0(x)f(x)dx=\int_0^1V_0(x)g(x)dx=0,$$
so the previous displayed inequalities imply (in the limit)
$$\int_0^1N_0(x)f(x)dx\leq\int_0^1f(x)g(x)dx\mbox{ and }\int_0^1N_0(x)g(x)dx\leq\int_0^1f(x)g(x)dx.$$
Finally, \eqref{gener-eq-Nt} and \eqref{0-0} along $t=t_n^{(1)}$ and $t=t_n^{(2)}$, respectively, imply (in the limit as $n\rightarrow\infty$) 
$$\int_0^1N_0(x)f(x)dx\geq\int_0^1f^2(x)dx\mbox{ and }\int_0^1N_0(x)g(x)dx\geq\int_0^1g^2(x)dx.$$
The last four displayed inequalities imply $\|f-g\|_2\leq0$. So, all the sub-sequential asymptotic  limits coincide, which finishes the proof.
\end{proof}
\begin{remark}
To summarize, we have proved that SP solutions to the Cauchy problem have an asymptotic limit if they are confined to a compact interval. Consequently, if $\C$ is a compact subset of $\Reals$, then the SP solution on $\C$ has an asymptotic limit, provided that $v_0\in C_0(\C)$.
\end{remark}
The proof of Theorem \ref{main1} also reveals
$$\int_0^1N_0(x)N_\infty(x)dx=\int_0^1N^2_\infty(x)dx.$$

Since for all $0\leq s\leq t$ we have \cite{Natile2009} $N(t,\cdot)=\xi_{s,t}\circ N(s,\cdot)$ for some Borel function $\xi_{s,t}$, we deduce $N_\infty=\xi_t\circ N(t,\cdot)$ for some Borel function $\xi_t$. Consequently, 
$$\frac{d}{dt}\int_0^1N(t,x)N_\infty(x)dx=\int_0^1v(t,N(t,x))N_\infty(x)dx=\int_0^1v(t,N(t,x))\xi_t(N(t,x))dx$$
$$=\int_0^1V_0(x)\xi_t(N(t,x))dx=\int_0^1V_0(x)N_\infty(x)dx=0.$$
We infer that 
$$\int_0^1N(t,x)N_\infty(x)dx=\int_0^1N^2_\infty(x)dx\mbox{ for all }t\geq0,$$
so that 
$$e(t):=W_2^2(\rho(t,\cdot),\rho_\infty)=\|N(t,\cdot)-N_\infty\|^2_2=\|N(t,\cdot)\|_2^2-\|N_\infty\|_2^2.$$
where $W_2$ is the Wasserstein distance with quadratic cost \cite{AGS}.
We then have 
\begin{equation}\label{derivative-1}
\dot e(t)=2\int_0^1N(t,x)v(t,N(t,x))dx=2\int_0^1V_0(x)N(t,x)dx,
\end{equation}
which, by Lemma \ref{V0Nt}, means $e$ is nonincreasing with derivative converging asymptotically to zero. Furthermore, from the proof of Lemma \ref{V0Nt}, we deduce 
\begin{equation}
\label{derivative-2}
\ddot e(t)=2\int_\Reals v^2(t,y)\rho(t,dy)=2|\rho'|^2(t).
\end{equation}
Here $|\rho'|$ denotes the {\it metric derivative} \cite{AGS} of the absolutely continuous curve $t\mapsto\rho(t,\cdot)$, defined for a.e. $t>0$ as
$$|\rho'|(t):=\lim_{s\rightarrow t}\frac{W_2(\rho(s,\cdot),\rho(t,\cdot))}{|s-t|}.$$
It is the smallest function $\beta\in L_{loc}^2(0,\infty)$ such that
$$W_2(\rho(s,\cdot),\rho(t,\cdot))\leq\int_s^t\beta(\tau)d\tau\mbox{ for all }0\leq s\leq t.$$
Thus, $e$ is convex on $[0,\infty)$ and asymptotically decreasing to zero. From \eqref{derivative-1} and \eqref{derivative-2} we first infer
\begin{equation}\label{asymp1}
|\rho'|\in L^2(0,\infty)\mbox{ and }2\int_0^\infty|\rho'|^2(t)dt=-\langle V_0,N_0\rangle 
\end{equation}
and 
\begin{equation}\label{asymp2}
\mathrm{id}\cdot|\rho'|^2\in L^1(0,\infty)\mbox{ and }2\int_0^\infty t|\rho'|^2(t)dt=W_2^2(\rho_0,\rho_\infty). 
\end{equation}
In fact, we get something more general than \eqref{asymp2}, namely
\begin{equation}\label{asymp3}
e(t)=2\int_t^\infty (s-t)|\rho'|^2(s)ds=2\int_0^\infty s|\rho'|^2(s+t)ds\mbox{ for all }t\geq0. 
\end{equation}
Since the energy is nonincreasing along SPS, we can actually improve on the obvious inequality 
$$|\rho'|(t)=\|v(t,\cdot)\|_{L^2(\rho(t,\cdot))}\leq \frac{1}{t},$$
which in the confined case arrives from the \eqref{eOleinik} condition. Indeed, by \eqref{veloc-ineq}, we have
$$\max\bigg\{\frac{v(t,y)}{y},\frac{-v(t,y)}{1-y}\bigg\}\leq\frac{1}{t},$$
which is equivalent to 
\begin{equation}\label{veloc-ineq1}
\frac{y-1}{t}\leq v(t,y)\leq\frac{y}{t}\mbox{ for all }y\in[0,1],\ t>0.
\end{equation}
In particular, this means
\begin{equation}\label{unif-velo-ineq}
|v(t,y)|\leq\frac{1}{t}\mbox{ for all }y\in[0,1],\ t>0
\end{equation}
\begin{prop}\label{metric-deriv-decay}
Let $\rho_0 \in \mathcal{P}(\Reals)$ and  $v_0\in C(\Reals)$, both supported in $[0,1]$.  If the SP evolution is confined to $[0,1]$, then
\begin{equation}\label{better-velo-decay}
\lim_{t\rightarrow\infty}t|\rho'|(t)=0.
\end{equation}
\end{prop}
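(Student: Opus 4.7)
The plan is to combine the integrability statement \eqref{asymp2}, which says $s\mapsto s|\rho'|^2(s)\in L^1(0,\infty)$, with the monotonicity of the kinetic energy along the SPS evolution. This monotonicity was already noted in the text preceding the statement: since sticky collisions only dissipate kinetic energy, the map
\[
t\longmapsto |\rho'|^2(t)=\int_\Reals v^2(t,y)\rho(t,dy)
\]
is nonincreasing. I would take this as the starting ingredient (and, if a formal justification is wanted, obtain it by differentiating $|\rho'|^2(t)=\int_0^1 v^2(t,N(t,x))\,dx$ and invoking $\ddot e = 2|\rho'|^2\geq 0$ together with the fact that $\dot e(t) = 2\theta(t)$ from Lemma \ref{V0Nt}; alternatively, by approximating by discrete SPS, where the claim is the usual elementary energy balance at each collision, and passing to the limit).

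Given monotonicity, the core estimate is the following localization. For any $t>0$ and any $s\in[t/2,t]$, the nonincreasing property gives $|\rho'|^2(s)\geq |\rho'|^2(t)$, so
\[
\int_{t/2}^{t} s\,|\rho'|^2(s)\,ds \;\geq\; |\rho'|^2(t)\int_{t/2}^{t} s\,ds \;=\; \frac{3t^2}{8}\,|\rho'|^2(t).
\]
Rearranging,
\[
\bigl(t\,|\rho'|(t)\bigr)^2 \;\leq\; \frac{8}{3}\int_{t/2}^{t} s\,|\rho'|^2(s)\,ds.
\]

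The right-hand side is a tail slice of the integral $\int_0^\infty s|\rho'|^2(s)\,ds=\tfrac12 W_2^2(\rho_0,\rho_\infty)$, which is finite by \eqref{asymp2}; hence it tends to zero as $t\to\infty$. This yields $\lim_{t\to\infty}t|\rho'|(t)=0$, as desired.

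The only delicate point is the monotonicity of $|\rho'|^2$; once this is in hand, the rest is the elementary tail-averaging argument above. I would present the monotonicity either as a short lemma (directly proved by the chain rule for $N(t,\cdot)$ and a collision-by-collision accounting), or, most efficiently in this paper, by appealing to the already-noted fact that the energy is nonincreasing along SPS (used to derive \eqref{unif-velo-ineq}).
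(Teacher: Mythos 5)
Your argument is essentially the paper's own proof: both combine the integrability $\int_0^\infty s|\rho'|^2(s)\,ds<\infty$ from \eqref{asymp2} with the fact that $|\rho'|$ is nonincreasing, and bound $\bigl(t|\rho'|(t)\bigr)^2$ by a constant times a tail slice of that integral (the paper uses the window $[t,2t]$, you use $[t/2,t]$ --- the same device). The paper, like you, simply takes the energy monotonicity along SPS as known (your first parenthetical ``justification'' via $\ddot e=2|\rho'|^2$ does not by itself yield it, but your discrete-approximation route does, so this is only a cosmetic point).
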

\begin{proof}
From \eqref{asymp2} we get
$$\lim_{t\rightarrow\infty}\int_t^{2t}s|\rho'|^2(s)ds=0.$$
Since $|\rho'|$ is nonincreasing, we have $s|\rho'|^2(s)\geq t|\rho'|^2(2t)$ for all $t\leq s\leq 2t$, so the last displayed equation finishes the proof.
\end{proof}
It is worth noting that passing to $T\rightarrow\infty$ in 
$$W_2(\rho(t,\cdot),\rho(T,\cdot))\leq \int_t^T|\rho'|(s)ds$$
yields 
$$W_2(\rho(t,\cdot),\rho_\infty)\leq \int_t^\infty|\rho'|(s)ds$$
so, by \eqref{asymp3}, we obtain
$$2\int_t^\infty (s-t)|\rho'|^2(s)ds\leq \bigg(\int_t^\infty|\rho'|(s)ds\bigg)^2.$$
However, this inequality gives no extra information on $|\rho'|$, since it is true that:
\begin{lemma}\label{general-omega}
For any nonincreasing, nonnegative function $\omega$ on $[0,\infty)$ which satisfies the necessary integrability conditions we have that 
$$2\int_t^\infty (s-t)\omega^2(s)ds\leq \bigg(\int_t^\infty\omega(s)ds\bigg)^2\mbox{ for all }t\geq0.$$
\end{lemma}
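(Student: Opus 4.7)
The plan is to recognize that the factor $(s-t)$ on the left is itself an integral, $s-t=\int_t^s d\sigma$, and to rewrite both sides of the claimed inequality as iterated integrals over the same region $\{(\sigma,s):t\leq\sigma\leq s\}$ via Tonelli's theorem. Once this is done, the inequality reduces to a pointwise comparison of integrands that follows immediately from the monotonicity of $\omega$, with no fine analytic input beyond nonnegativity.

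More precisely, I would first apply Tonelli on the left-hand side to obtain
$$2\int_t^\infty(s-t)\omega^2(s)\,ds = 2\int_t^\infty \omega^2(s)\int_t^s d\sigma\,ds = 2\int_t^\infty\!\!\int_\sigma^\infty \omega^2(s)\,ds\,d\sigma.$$
Then I would symmetrize the square on the right by writing
$$\bigg(\int_t^\infty\omega(s)\,ds\bigg)^2 = \int_t^\infty\!\!\int_t^\infty \omega(s_1)\omega(s_2)\,ds_1\,ds_2 = 2\int_t^\infty \omega(\sigma)\!\int_\sigma^\infty \omega(s)\,ds\,d\sigma,$$
the last equality following by restricting the symmetric integrand to $\{s_1\leq s_2\}$ and relabeling $(s_1,s_2)=(\sigma,s)$.

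To close, I would invoke the monotonicity of $\omega$: for $s\geq\sigma$ one has $\omega(s)\leq\omega(\sigma)$, and since $\omega\geq0$,
$$\omega^2(s) = \omega(s)\cdot\omega(s)\leq \omega(\sigma)\omega(s).$$
Integrating this pointwise bound against $ds\,d\sigma$ over $\{t\leq\sigma\leq s\}$ and multiplying by $2$ yields the inequality. There is no genuine obstacle: the ``necessary integrability conditions'' in the statement are precisely what is needed to apply Tonelli and to rule out the $\infty$-valued degenerate cases, and the Hardy-type flavor of the inequality makes the single pointwise comparison above sufficient. As a sanity check, equality forces $\omega(s)=\omega(\sigma)$ for a.e. $s\geq\sigma$ in the support, which is consistent with $\omega$ being essentially constant on a tail, the only case where $2\int_t^\infty (s-t)\omega^2(s)\,ds$ and $\big(\int_t^\infty\omega(s)\,ds\big)^2$ coincide (and both equal to $+\infty$ there, unless the constant is zero).
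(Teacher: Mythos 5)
Your proof is correct, but it takes a genuinely different route from the paper's. The paper sets $\phi(t):=2\int_t^\infty(s-t)\omega^2(s)\,ds-\bigl(\int_t^\infty\omega(s)\,ds\bigr)^2$, computes its weak derivative, observes that $\dot\phi(t)=2\int_t^\infty[\omega(t)-\omega(s)]\omega(s)\,ds\ge 0$ by the monotonicity and nonnegativity of $\omega$, and concludes from $\phi(\infty)=0$ that $\phi\le 0$. You avoid differentiation altogether: writing $s-t=\int_t^s d\sigma$ and symmetrizing the square, Tonelli turns both sides into integrals over the region $\{t\le\sigma\le s\}$, and the inequality reduces to the pointwise bound $\omega^2(s)\le\omega(\sigma)\omega(s)$. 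Your version buys some robustness --- it needs only nonnegativity and measurability, Tonelli handles the degenerate $+\infty$ cases automatically, and there is no need to justify the absolute continuity of $\phi$ or the differentiation under the integral sign implicit in the paper's computation; the paper's version, in exchange, exhibits the derivative $\dot\phi$ explicitly, which fits the spirit of the surrounding decay estimates for $e(t)$. One small caveat on your closing ``sanity check'': equality does not force both sides to be infinite or $\omega$ to vanish; for instance $\omega=c\,\mathbf{1}_{[t,T]}$ with $T<\infty$ gives equality with both sides equal to $c^2(T-t)^2$. This remark is peripheral and does not affect the validity of your proof of the inequality.
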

\begin{proof}
Let 
$$\phi(t):=2\int_t^\infty (s-t)\omega^2(s)ds-\bigg(\int_t^\infty\omega(s)ds\bigg)^2\mbox{ for all }t\geq0.$$
The weak derivative of $\phi$ as an absolutely continuous function is
\begin{eqnarray*}
\dot\phi(t)&=&-2t\omega^2(t)-2\int_t^\infty\omega^2(s)ds+2t\omega^2(t)+2\omega(t)\int_t^\infty\omega(s)ds\\
&=&2\int_t^\infty[\omega(t)-\omega(s)]\omega(s)ds\geq0
\end{eqnarray*}
since $\omega$ is nonnegative and nonincreasing. We end the proof by observing that $\phi(\infty)=0$.
\end{proof}
We learn more about the asymptotic behavior of $\rho$ (or, equivalently, of $N$) from:
\begin{prop}\label{spacetime}
Let $\rho_0 \in \mathcal{P}_2(\Reals)$ and  $v_0\in C_b(\Reals)$. Then the function
\begin{equation}\label{t-x}
P(t,x):=\int_0^x N(t,z)dz\mbox{ is concave in }t\geq0\mbox{ and convex in }x\in[0,1].
\end{equation}
\end{prop}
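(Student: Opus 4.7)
The plan is to represent $P(t,x)$ as the convex envelope (in the $x$-variable, on $[0,1]$) of the affine-in-$t$ family
$$G_t(x) := \int_0^x N_0(z)\,dz + tF_0(x),$$
with $F_0$ as in Lemma \ref{V0F0}, and then to read off both monotonicity assertions from this single identification.

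First I would invoke the Natile--Savaré projection formula $N(t,\cdot) = \mathrm{proj}_{\mathcal{K}}(N_0 + tV_0)$ together with the characterisation of $\mathrm{proj}_{\mathcal{K}}$ as the right derivative of the convex envelope on $[0,1]$ of the antiderivative---the same principle already invoked in the proof of Lemma \ref{V0F0}---to conclude $N(t,\cdot) = \hat G_t'$. Integrating from $0$ to $x$ and noting $\hat G_t(0) = G_t(0) = 0$ then yields $P(t,x) = \hat G_t(x)$. Convexity of $P(t,\cdot)$ on $[0,1]$ is immediate from the very definition of the convex envelope.

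For the concavity in $t$, I would appeal to the one-dimensional Carathéodory representation
$$\hat G_t(x) = \inf\Big\{\lambda G_t(a) + (1-\lambda)G_t(b) : 0 \leq a \leq x \leq b \leq 1,\ \lambda \in [0,1],\ \lambda a + (1-\lambda) b = x\Big\}.$$
Because $G_t(y) = \int_0^y N_0(z)\,dz + tF_0(y)$ depends affinely on $t$ for each fixed $y \in [0,1]$, every expression inside the infimum is affine in $t$. Since a pointwise infimum of affine functions is concave, $t \mapsto P(t,x) = \hat G_t(x)$ is concave on $[0,\infty)$.

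I do not anticipate a substantive obstacle; the only verification point is that the equality $P(t,x) = \hat G_t(x)$ really holds, and not merely up to an additive constant. This reduces to checking $\hat G_t(0) = P(t,0) = 0$ (trivial, since the convex envelope agrees with $G_t$ at the endpoints of $[0,1]$) and $\hat G_t(1) = P(t,1)$: the former equality holds for the same endpoint reason, while the latter then follows by taking $\kappa \equiv \pm 1$ in \eqref{gener-ineq}, which forces $P(t,1) = \int_0^1 N_0(z)\,dz + tF_0(1) = G_t(1)$, matching $\hat G_t(1) = G_t(1)$.
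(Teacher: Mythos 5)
Your proof is correct and rests on the same pillar as the paper's argument: the Natile--Savar\'e identification of $P(t,\cdot)$ with the convex envelope $\hat G_t$ of the affine-in-$t$ obstacle $G_t(x)=\int_0^x N_0(z)dz+tF_0(x)$ (the paper phrases this as ``$P(t,\cdot)$ is the largest convex function below $G_t$ with the prescribed values at $x=0,1$''). Where you diverge is the concavity-in-$t$ step: the paper exploits maximality --- for $0\le s\le t$ the average $\tfrac12[P(s,\cdot)+P(t,\cdot)]$ is a convex minorant of $G_{(s+t)/2}$ with the correct endpoint values, hence lies below $P\big(\tfrac{s+t}{2},\cdot\big)$, which gives midpoint concavity (full concavity then needs continuity of $t\mapsto P(t,x)$, left implicit in the paper) --- whereas you use the one-dimensional Carath\'eodory representation of the envelope as an infimum of two-point convex combinations, each affine in $t$, so concavity is immediate with no midpoint/continuity upgrade. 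That is a small but genuine advantage of your route; the paper's route avoids invoking the representation formula and uses only the extremal characterization already quoted from \cite{Natile2009}. Two side remarks: your verification that $P(t,1)=G_t(1)$ via $\kappa\equiv\pm1$ in \eqref{gener-ineq} is harmless but redundant, since once $N(t,\cdot)=\hat G_t'$ a.e.\ and $\hat G_t$ is absolutely continuous on $[0,1]$, the identity $P(t,x)=\hat G_t(x)$ follows from $\hat G_t(0)=G_t(0)=0$ alone; and because the proposition only assumes $\rho_0\in\mathcal{P}_2(\Reals)$, $N_0$ need not be bounded, so $\hat G_t$ need not be Lipschitz (unlike in Lemma \ref{V0F0}) --- it is, however, convex and continuous on $[0,1]$, hence absolutely continuous there, which is all your integration step requires.
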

\begin{proof}
The convexity in $x$ is obvious due to the monotonicity of $N(t,\cdot)$. 
Now, once again, from \cite{Natile2009} we have that $P(t,\cdot)$ is the largest convex function such that
$$P(t,x)\leq\int_0^x\big[N_0(z)+tV_0(z)\big]dz\mbox{ for all }t\geq0\mbox{ and }x\in[0,1]$$
and 
$$P(t,0)=0,\ P(t,1)=\int_0^1\big[N_0(z)+tV_0(z)\big]dz\mbox{ for all }t\geq0.$$
Thus, if $0\leq s\leq t$, we have
$$\frac{1}{2}\big[P(s,x)+P(t,x)\big]\leq\int_0^x\bigg[N_0(z)+\frac{s+t}{2}V_0(z)\bigg]dz\mbox{ for all }x\in[0,1].$$
Note that
$$\frac{1}{2}\big[P(s,0)+P(t,0)\big]=0\mbox{ and }\frac{1}{2}\big[P(s,1)+P(t,1)\big]=\int_0^1\bigg[N_0(z)+\frac{s+t}{2}V_0(z)\bigg]dz$$
and the function 
$$[0,1]\ni x\mapsto \frac{1}{2}\big[P(s,x)+P(t,x)\big]\mbox{ is convex}.$$
But
$$P\bigg(\frac{s+t}{2},\cdot\bigg)\mbox{ is the largest convex function}$$
that satisfies the previous three displayed conditions, so we are done.
\end{proof}
In the confined case we know from \eqref{Nt-cv} that for all $x\in[0,1]$, $P(\cdot,x)$ has a finite asymptotic limit, so we can easily deduce:
\begin{corollary}\label{Nt-decrease}
Once again, assume $\rho\in \mathcal{P}([0,1])$ and $v_0\in C(\Reals)$ with $v_0(0)=v_0(1)=0$. If the SP evolution is  confined to $[0,1]$, then $[0,\infty)\ni t\mapsto P(t,x)$ is a nondecreasing, concave function, such that 
$$\lim_{t\rightarrow\infty}P(t,x)=\int_0^x N_\infty(z)dz\mbox{ uniformly for all } x\in[0,1].$$
\end{corollary}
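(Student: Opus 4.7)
The concavity of $[0,\infty)\ni t\mapsto P(t,x)$ is already recorded in Proposition \ref{spacetime}, so nothing new is needed there. For the monotonicity, I would exploit the confinement hypothesis: since $N(t,z)\in[0,1]$ for all $(t,z)\in[0,\infty)\times[0,1]$, we have the uniform double bound $0\leq P(t,x)\leq x\leq 1$. A concave function on $[0,\infty)$ that is uniformly bounded must be nondecreasing: indeed, if $P(s,x)>P(t,x)$ for some $s<t$, the nonincreasing-slopes property of concave functions would force every slope on $[t,r]$ with $r>t$ to be at most $(P(t,x)-P(s,x))/(t-s)<0$, driving $P(r,x)\to-\infty$ and contradicting the lower bound $P(r,x)\geq0$.

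The pointwise convergence is a quick consequence of Theorem \ref{main1}: since $N(t,\cdot)\to N_\infty$ Lebesgue-a.e.\ on $[0,1]$ with the uniform bound $|N(t,\cdot)|\leq 1$, the dominated convergence theorem yields
\[
P(t,x)\xrightarrow[t\rightarrow\infty]{}\int_0^x N_\infty(z)dz=:Q(x)\mbox{ for every }x\in[0,1].
\]
Observe that $Q$ is $1$-Lipschitz on $[0,1]$ (since $0\leq N_\infty\leq 1$), hence continuous.

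The only step that requires a bit of care is upgrading pointwise to uniform convergence in $x$. I would invoke the classical fact that a family of monotone functions on a compact interval that converges pointwise to a continuous function converges uniformly. Concretely, for every $t\geq0$ the function $x\mapsto P(t,x)$ is nondecreasing (because $N(t,\cdot)\geq0$). Given $\epsilon>0$, the uniform continuity of $Q$ provides a partition $0=x_0<x_1<\cdots<x_k=1$ with $Q(x_{i+1})-Q(x_i)<\epsilon/3$ for every $i$. Choose $T_\epsilon$ so large that $|P(t,x_i)-Q(x_i)|<\epsilon/3$ for every $i$ and every $t\geq T_\epsilon$; then, for any $x\in[x_i,x_{i+1}]$, the monotonicity sandwich $P(t,x_i)\leq P(t,x)\leq P(t,x_{i+1})$, together with the analogous inequalities for $Q$, yields $|P(t,x)-Q(x)|<\epsilon$ uniformly in $x$ for all $t\geq T_\epsilon$.

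The main obstacle, such as it is, lies in this last step; everything else is bookkeeping from earlier results in the paper. Once the Pólya-type upgrade is in hand, the three conclusions—concavity, monotonicity and uniform convergence—combine directly into the statement of the corollary.
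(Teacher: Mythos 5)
Your proof is correct and follows essentially the route the paper intends (the paper states the corollary as an easy consequence of Proposition \ref{spacetime} and Theorem \ref{main1}): concavity in $t$ from Proposition \ref{spacetime}, monotonicity from concavity together with the uniform bound $0\leq P(t,x)\leq 1$ supplied by confinement, pointwise convergence via \eqref{Nt-cv} and dominated convergence, and a P\'olya-type upgrade to uniform convergence using monotonicity in $x$. As a minor simplification, the uniform convergence also follows immediately from the observation that the functions $x\mapsto P(t,x)$ are equi-Lipschitz with constant $1$ (since $0\leq N(t,\cdot)\leq1$), so pointwise convergence on the compact interval $[0,1]$ is automatically uniform.
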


The equilibria are unstable with respect to the initial data, as this very simple example shows. Let $\rho_0^n:=(\delta_{1/n}+\delta_{1-1/n})/2$ with $v_0^n(1/n)=1/n$ and $v_0^n(1-1/n)=-1/n$, which can be chosen arbitrarily close to the equilibrium solution $\rho_\infty=(\delta_0+\delta_1)/2$, $v\equiv0$. However, the equilibrium solution for any integer $n\geq2$ is $\rho^n_\infty=\delta_{1/2}$.

\noindent {\bf Rates of decay}

In view of \eqref{asymp1}, if we denote by $\mu$ the finite Borel measure on $(0,\infty)$ whose density is $|\rho'|^2$, we can apply the de la Vall\'{e}e-Poussin Lemma to \eqref{asymp2} to infer the existence of a strictly convex, superlinear, nonnegative, $C^\infty$ function $\Psi$ on $[0,\infty)$ such that $\Psi\in L^1(\mu)$ and $\Psi(0)=0$. 
\begin{prop}\label{power-law1}
Suppose there exist $a,\ \gamma>0$ such that $\Psi(t)\geq at^{1+\gamma}$ for all $t\geq0$. Then 
$$e(t)\leq C(\rho_0,v_0)t^{-\gamma}\mbox{ for all }t>0,$$
where 
$$C(\rho_0,v_0):=\frac{1}{a\gamma}\int_0^\infty \Psi(s)|\rho'|^{2}(s)ds.$$
\end{prop}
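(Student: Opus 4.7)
The plan is to combine the Wasserstein bound $W_2(\rho(t,\cdot),\rho_\infty)\leq\int_t^\infty|\rho'|(s)\,ds$, recorded in the paragraph just preceding the statement, with a single weighted Cauchy--Schwarz estimate. Since $e(t)=W_2^2(\rho(t,\cdot),\rho_\infty)$, my starting point is the inequality $e(t)\leq\bigl(\int_t^\infty|\rho'|(s)\,ds\bigr)^2$, which uses no new ingredient.

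The key step is to factor $|\rho'|(s)=s^{(1+\gamma)/2}|\rho'|(s)\cdot s^{-(1+\gamma)/2}$ and apply Cauchy--Schwarz on $[t,\infty)$; this yields $\bigl(\int_t^\infty|\rho'|(s)\,ds\bigr)^2\leq\bigl(\int_t^\infty s^{1+\gamma}|\rho'|^2(s)\,ds\bigr)\bigl(\int_t^\infty s^{-(1+\gamma)}\,ds\bigr)$. Since $\gamma>0$, the second factor evaluates exactly to $\frac{1}{\gamma t^\gamma}$, which supplies the asserted decay rate and the factor $1/\gamma$ appearing in $C(\rho_0,v_0)$. The first factor is then dominated by means of the hypothesis $\Psi(s)\geq as^{1+\gamma}$, which gives $\int_t^\infty s^{1+\gamma}|\rho'|^2(s)\,ds\leq\frac{1}{a}\int_0^\infty\Psi(s)|\rho'|^2(s)\,ds$, a quantity that is finite by the very construction of $\Psi$ via the de la Vall\'{e}e-Poussin Lemma.

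Chaining the three inequalities gives exactly $e(t)\leq\frac{1}{a\gamma t^\gamma}\int_0^\infty\Psi(s)|\rho'|^2(s)\,ds$, which is the claim. I do not anticipate any real obstacle: the only nontrivial input is the choice of weight $s^{(1+\gamma)/2}$ in the Cauchy--Schwarz step, dictated by the exponent in the hypothesis on $\Psi$, and it is precisely this weight that recovers the sharp constant $1/\gamma$ rather than the cruder $2/(1+\gamma)$ one obtains from naive pointwise comparisons between $(s-t)$ and $s^{1+\gamma}t^{-\gamma}$. Everything else is bookkeeping.
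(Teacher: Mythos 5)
Your argument is correct and essentially the paper's own: both start from $e(t)\leq\bigl(\int_t^\infty|\rho'|(s)\,ds\bigr)^2$ and run a weighted Cauchy--Schwarz estimate combined with $\Psi(s)\geq as^{1+\gamma}$ and $\int_t^\infty s^{-1-\gamma}ds=\tfrac{1}{\gamma}t^{-\gamma}$, arriving at the same constant $C(\rho_0,v_0)$. The only cosmetic difference is that you weight Cauchy--Schwarz by $s^{(1+\gamma)/2}$ and invoke the hypothesis on $\Psi$ afterwards, whereas the paper weights by $\Psi^{1/2}$ and invokes it on the factor $\int_t^\infty ds/\Psi(s)$.
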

\begin{proof}
Let $t>0$. By Cauchy-Schwarz, we have
\begin{eqnarray*}
\bigg(\int_t^\infty|\rho'|(s)ds\bigg)^2&\leq& \int_t^\infty \frac{ds}{\Psi(s)}\,\int_t^\infty \Psi(s)|\rho'|^{2}(s)ds\\
&\leq& \frac{1}{a}\int_t^\infty s^{-1-\gamma}ds \,\int_0^\infty \Psi(s)|\rho'|^{2}(s)ds\\
&=& C(\rho_0,v_0) t^{-\gamma},
\end{eqnarray*}
which finishes the proof.
\end{proof}
Other type of conditions on $|\rho'|$ lead to power-law decay. For example:
\begin{prop}\label{power-law2}
Suppose there exists $1/2<p<1$ such that $g(t):=t^{1/2}|\rho'|(t)$ belongs to $L^{2p}(0,\infty)$. Then 
$$e(t)\leq C(\rho_0,v_0;p)t^{1-\frac{1}{p}}\mbox{ for all }t>0,$$
where 
$$C(\rho_0,v_0;p):=\bigg(\frac{2p-1}{1-p}\bigg)^{2-\frac{1}{p}}\|g\|^2_{L^{2p}(0,\infty)}.$$
\end{prop}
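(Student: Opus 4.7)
The plan is to use the Wasserstein bound derived just before Lemma \ref{general-omega}, namely
\[
W_2(\rho(t,\cdot),\rho_\infty)\leq \int_t^\infty |\rho'|(s)\,ds,
\]
which gives $e(t)\leq \bigl(\int_t^\infty |\rho'|(s)\,ds\bigr)^2$. The idea is then to insert the hypothesis by writing $|\rho'|(s) = s^{-1/2}g(s)$ and estimating the integral by H\"older's inequality with conjugate exponents $2p$ and $q:=2p/(2p-1)$, placing $g$ into $L^{2p}$ and $s^{-1/2}$ into $L^q$.

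First I would verify the key integrability: the exponent on $s$ in the $L^q$ factor is $-q/2 = -p/(2p-1)$, and the tail integral $\int_t^\infty s^{-p/(2p-1)}\,ds$ converges precisely because $p/(2p-1)>1$, which is equivalent to $p<1$. A direct calculation yields
\[
\int_t^\infty s^{-p/(2p-1)}\,ds = \frac{2p-1}{1-p}\, t^{(p-1)/(2p-1)}.
\]
Raising to the power $1/q=(2p-1)/(2p)$ produces the factor $\bigl(\frac{2p-1}{1-p}\bigr)^{(2p-1)/(2p)} t^{(p-1)/(2p)}$.

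Second, I would assemble H\"older: since $\|g\|_{L^{2p}(t,\infty)} \leq \|g\|_{L^{2p}(0,\infty)}$, we obtain
\[
\int_t^\infty |\rho'|(s)\,ds \leq \left(\frac{2p-1}{1-p}\right)^{\frac{2p-1}{2p}} t^{\frac{p-1}{2p}} \|g\|_{L^{2p}(0,\infty)}.
\]
Squaring this and noting that $(2p-1)/p = 2-1/p$ and $(p-1)/p = 1-1/p$ gives exactly the stated bound with the stated constant $C(\rho_0,v_0;p)$.

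There is no real obstacle here beyond bookkeeping of the exponents; the only place where care is needed is confirming that the hypothesis $1/2<p<1$ is exactly what makes both applications of H\"older work, with $p>1/2$ ensuring $q=2p/(2p-1)>0$ is a legitimate H\"older exponent and $p<1$ ensuring convergence of the $s^{-p/(2p-1)}$ integral at infinity. No further inputs from the Sticky Particles structure are needed beyond the already-established Wasserstein inequality $W_2(\rho(t,\cdot),\rho_\infty)\leq \int_t^\infty|\rho'|(s)\,ds$.
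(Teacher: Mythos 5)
Your proposal is correct and follows essentially the same route as the paper: bound $e(t)\leq\bigl(\int_t^\infty|\rho'|(s)\,ds\bigr)^2$ via the Wasserstein tail estimate, then apply H\"older with exponents $2p$ and $q=2p/(2p-1)$ to the factorization $|\rho'|(s)=s^{-1/2}g(s)$, with $p<1$ guaranteeing convergence of $\int_t^\infty s^{-q/2}\,ds$. The exponent bookkeeping matches the stated constant exactly.
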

\begin{proof}
Let $t>0$. H\"{o}lder's inequality yields
$$\bigg(\int_t^\infty|\rho'|(s)ds\bigg)^2\leq \bigg(\int_t^\infty s^{-q/2}ds\bigg)^\frac{2}{q}\bigg(\int_t^\infty s^p|\rho'|^{2p}(s)ds\bigg)^\frac{1}{p}$$
for $p>1/2$ and $q>1$ such that 
$$\frac{1}{2p}+\frac{1}{q}=1.$$
So,
$$e(t)\leq C(\rho_0,v_0;p)t^{\frac{2}{q}-1}=C(\rho_0,v_0;p)t^{1-\frac{1}{p}}.$$
Note that $q>2$ is equivalent to $p<1$ and the conclusion follows.
\end{proof}
 \begin{figure}[h]
\scalebox{0.90}
 {\includegraphics[width=\textwidth]{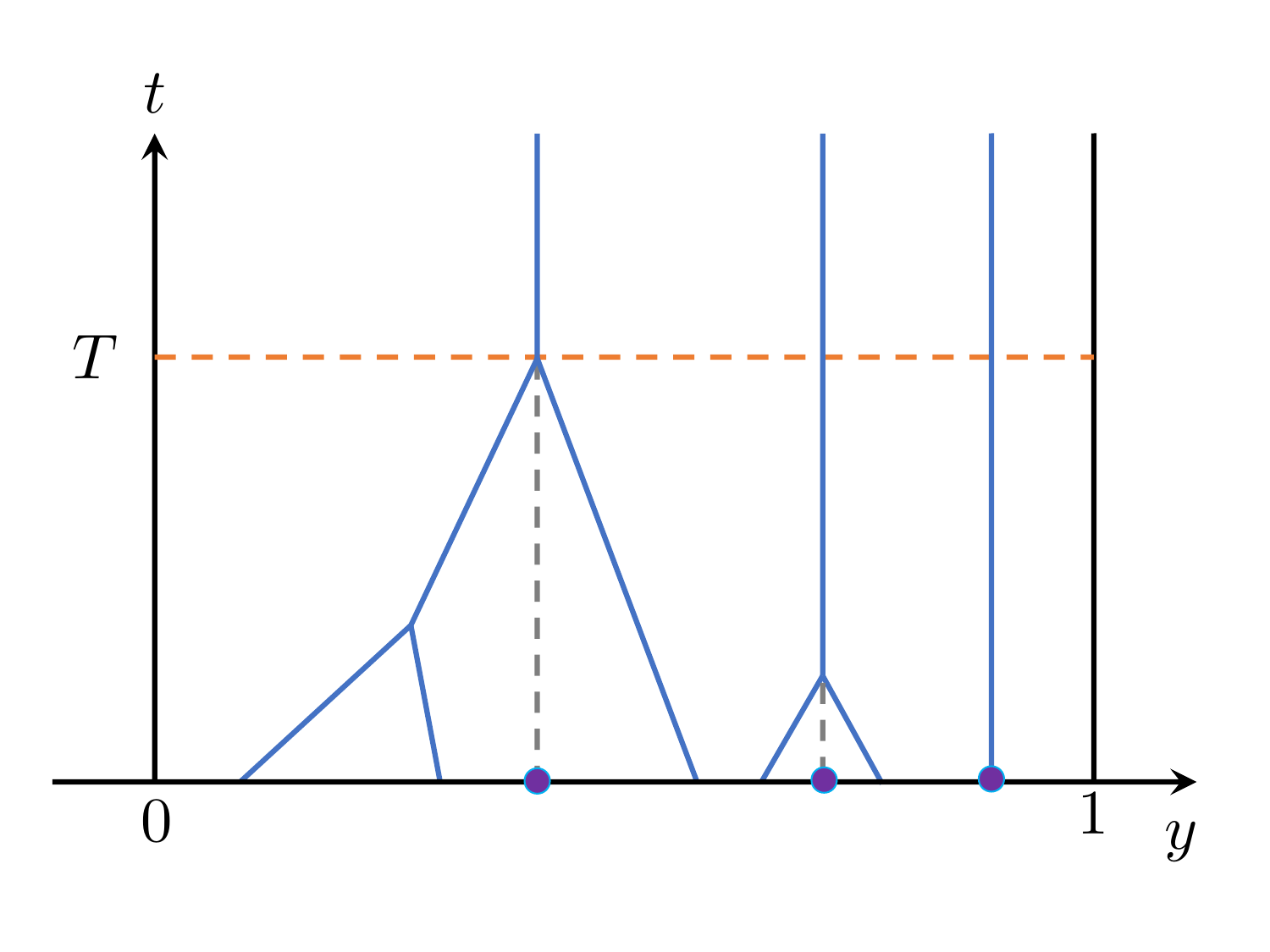}}
  \caption{{Confined SP evolution starting at a finite, convex combination of Dirac masses. The equilibrium is reached at $t=T$ and consists of a linear combination of Dirac masses situated at the three highlighted locations.}}
  \label{states4}
\end{figure}
\begin{remark}\label{finite-discrete}
The above two propositions apply to finite convex combinations of Dirac masses $\rho_0$ and velocities $v_0$ which guarantee that the SP evolution is confined within a compact interval. Indeed, in all these cases, the asymptotic limit is actually reached in finite time and the conditions on $|\rho'|$ from Proposition \ref{power-law1} and Proposition \ref{power-law2} are trivially satisfied for all $1/2<p<1$ and all $\gamma>0$.
\end{remark}
\section{Questions about the rate of decay}
Is it possible that $e$ can decay as slowly as possible, i.e. for any convex function $\omega$ on $[0,\infty)$ that decays asymptotically to zero there are initial conditions for which $e\geq \omega$ on $[T,\infty)$ for some $T>0$? Let us consider $\{x_k\}_{k\geq0}$, $\{M_k\}_{k\geq0}$ to be two increasing sequences of positive reals converging to 1. The masses $m_0:=M_0$ and $m_k:=M_k-M_{k-1}$ (chosen such that $\{m_k\}_{k\geq0}$ is decreasing) for $k\geq1$ are assigned to the locations $x_k$. Furthermore, let $\{b_k\}_{k\geq 1}$ be a decreasing sequence of positive numbers converging to 0. The particle of mass $m_0$ located at $x_0$ starts traveling with some velocity $a>0$, while from $x_k$, $k\geq1$ the mass $m_k$ travels with velocity $-b_k$. The points of contact are $(t_k,y_k)$ and the velocity of the trajectory starting at $x_0$ is denoted by $v_k$ between the times $t_k$ and $t_{k+1}$. After some straightforward albeit somewhat laborious calculations we obtain the following recursive formulae
\begin{equation}\label{vk}
v_k=\frac{M_{k-1}v_{k-1}-m_kb_k}{M_k},
\end{equation}
\begin{equation}\label{tk}
t_k=\frac{x_k-y_{k-1}+t_{k-1}v_{k-1}}{v_{k-1}+b_k},
\end{equation}
and 
\begin{equation}\label{yk}
y_k=\frac{x_kv_{k-1}+b_ky_{k-1}-t_{k-1}v_{k-1}b_k}{v_{k-1}+b_k}.
\end{equation}
 \begin{figure}[h]
\scalebox{0.90}
 {\includegraphics[width=\textwidth]{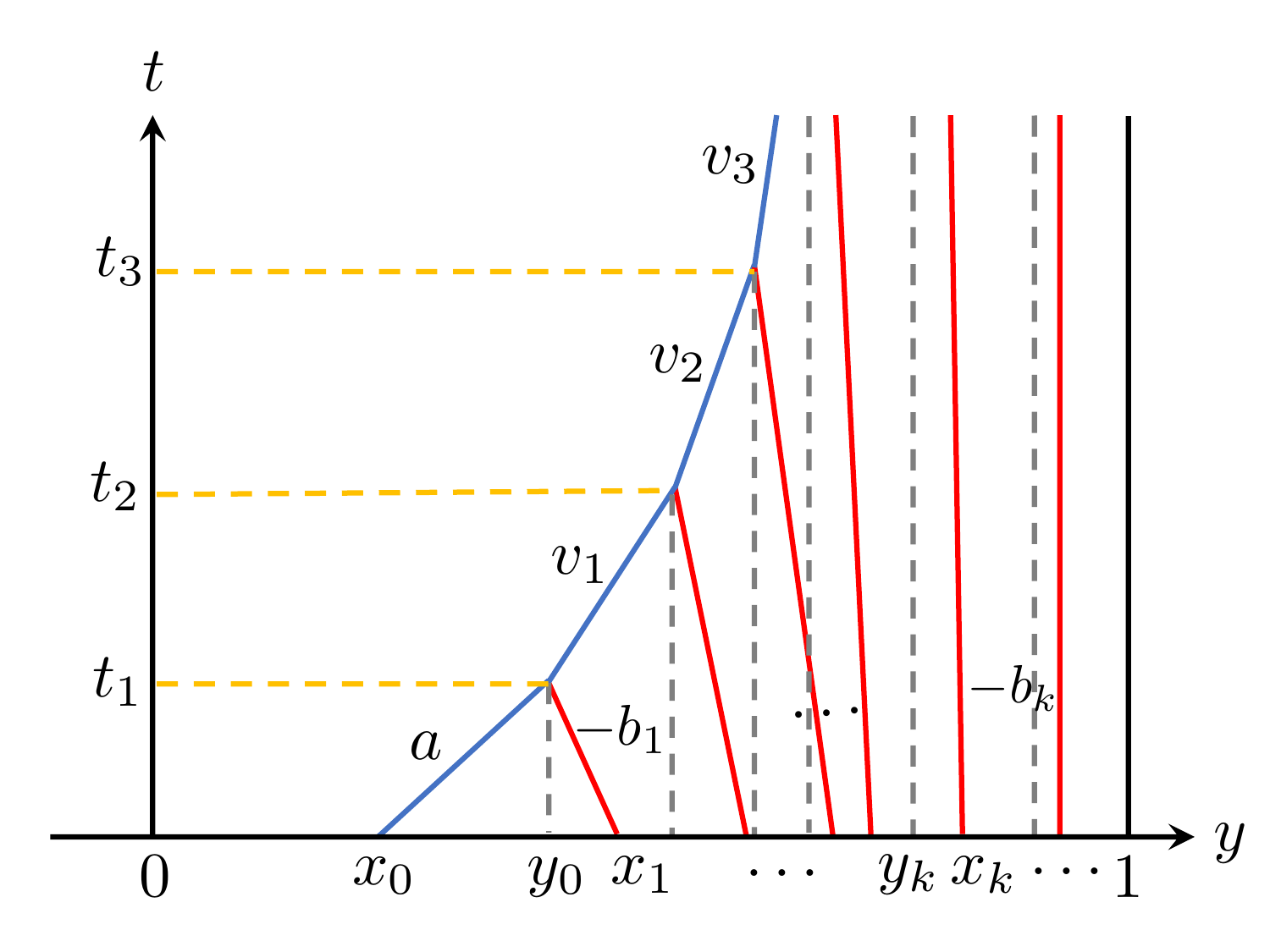}}
  \caption{{The mass starting at $x_0$ initially traveling to the right with velocity $a$ is ``bombarded'' successively by the masses starting at $x_k$, $k\geq1$ and traveling to the left with velocities $-b_k$.}}
  \label{states5}
\end{figure}
Here $t_0=0$, $v_0=a$, $y_0=x_0$. Note that $a$ must be chosen such that $v_k\rightarrow0^+$ as $k\rightarrow\infty$. By \eqref{vk} we get
$$M_kv_k=am_0-\sum_{i=1}^km_ib_i\mbox{ for all }k\geq1,$$
which, in light of $M_k\rightarrow 1^-$ as $k\rightarrow\infty$, reveals 
$$\lim_{k\rightarrow\infty}v_k=0\mbox{ if and only if }a=\frac{1}{m_0}\sum_{i=1}^\infty m_ib_i.$$
With the freedom to choose the sequences $\{x_k\}_{k\geq0}$, $\{M_k\}_{k\geq0}$ and $\{b_k\}_{k\geq1}$ with the properties described above, it will be interesting to calculate the rate of decay for $e(t)$; note that $\rho(t,\cdot)$ can be explicitly calculated (it is a countable convex combination of point masses) and, since the equilibrium solution is $\rho_\infty=\delta_{\bar y}$, (where $\bar y$ is the limit of the increasing sequence $\{y_k\}_k$) we get an explicit formula for $e(t)$. 
We have 
$$\rho(t_k,\cdot)=M_k\delta_{y_k}+\sum_{j=k+1}^\infty m_j\delta_{x_j-t_kb_j},$$
so that
$$e(t_k)=W_2^2(\rho(t_k,\cdot),\delta_{\bar y})=M_k(\bar y-y_k)^2+\sum_{j=k+1}^\infty m_j(\bar y-x_j+t_kb_j)^2.$$

\noindent {\bf Questions 1, 2:} 1. For different countable infinite combinations and corresponding initial velocities, are there exponents $\gamma>0$ such that $t_k^{\gamma}e(t_k)$ is bounded? 2. Furthermore, is there such a $\gamma$ which works for all initial configurations of the type shown in Figure \ref{states5}?

\noindent The initial configuration $\rho_0$ is a countable infinite convex combination of Dirac masses; this is an essential point to be made, because for finite convex combinations the equilibria are arrived at in finite time. By taking $x_k=1-0.5^{k+1}$, $m_k=0.5^{k+1}$ , $b_{k+1}=0.5^{k+1}$ for $k\geq0$, we have been able to construct explicit solutions (using Mathematica) with decay rate of exactly $t^{-1}$ for $e(t)$. Using Matlab, we tried different values for the above sequences and numerical evidence seems to indicate that by taking $b_k:=n^{-k}$, the rate of decay slows down and it is of the form $t^{-\gamma}$ with $\gamma\rightarrow0^+$ as $n\rightarrow\infty$. So the answer to Q2 seems to be negative, which means that, likely, a general power-law decay is unavailable.

\noindent {\bf Question 3.} Since the answer to Q1 appears to be negative, is it possible that the decay to equilibrium can be as slow as possible? In other words, let  $\omega$ be a convex function on $[0,\infty)$ that decays asymptotically to zero.  Are there sequences $\{x_k\}_{k\geq0}$, $\{M_k\}_{k\geq0}$ and $\{b_k\}_{k\geq1}$ as above for which $e(t_k)\geq \omega(t_k)$ for sufficiently large $k$? 

\noindent {\bf Q4.} Are there different types of initial data that can help answer Q3?

\section{An alternate proof for the existence of the asymptotic limit}
Here we show how the Lagrangian characterization of the SPS from \cite{Hynd2019} can be used to prove existence of the asymptotic limit without recourse to the projection characterization of the optimal maps in \cite{Natile2009}. 
Let us consider the equation 
$$
\dot X(t,\cdot)=\E_{\rho_0}[v_0|X(t,\cdot)]\;\; \text{a.e.}\; t\ge 0
$$
along with the initial condition
$$
X(0,\cdot)=\text{id}_\Reals.
$$
 In \cite{Hynd2019}, it is shown that if $\rho_0\in {\calP}_2(\R)$ and if the initial velocity $v_0: \Reals\rightarrow \Reals$ is absolutely continuous, then 
there is a Lipschitz continuous solution $X: [0,\infty)\rightarrow L^2(\rho_0)$ of this initial value problem. Moreover, the $X$ constructed was shown to have various properties including 
$$
\text{$y\mapsto X(t,y)$ is nondecreasing}\mbox{  on }\mathrm{spt}(\rho_0)\mbox{ for each }t\ge 0.$$

 \par Another property which was established is that for each pair of times $s,t$ with
  $0< s\le t$, there is a Lipschitz $f_{t,s}:\Reals\rightarrow \Reals$ such that 
  $$
  X(t,\cdot)=f_{t,s}(X(s,\cdot)).
  $$
It then follows that 
 \be\label{XteeIdentity}
  \int_{\Reals}X(t,y)\dot X(s,y)\rho_0(dy)=  \int_{\Reals}X(t,y)v_0(y)\rho_0(dy)
  \ee
  for almost every $s\in [0,t]$.  This observation leads to the subsequent assertion. 
\begin{lemma}
For each $0\le s\le t$, 
\be\label{projectionIdentity}
  \int_{\Reals}X(t,y) X(s,y)\rho_0(dy)=  \int_{\Reals}X(t,y)[y+s v_0(y)]\rho_0(dy).
\ee
\end{lemma}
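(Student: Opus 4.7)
The plan is to derive \eqref{projectionIdentity} by integrating the identity \eqref{XteeIdentity} in time. Since $X : [0,\infty) \to L^2(\rho_0)$ is Lipschitz, it is absolutely continuous and we have
$$X(s,y) - y = X(s,y) - X(0,y) = \int_0^s \dot X(\tau, y)\, d\tau$$
in the sense of the Bochner integral in $L^2(\rho_0)$ (and thus for $\rho_0$-a.e.\ $y$, after choosing a suitable representative). This is the fundamental calculus identity that will convert the derivative $\dot X$ appearing in \eqref{XteeIdentity} into the difference $X(s,\cdot) - \mathrm{id}_\Reals$ that appears in \eqref{projectionIdentity}.

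The first step is to fix $t \ge 0$ and, for each $s \in [0,t]$, consider the scalar function
$$\tau \longmapsto \int_\Reals X(t,y)\, \dot X(\tau, y)\, \rho_0(dy), \qquad \tau \in [0,s].$$
By \eqref{XteeIdentity}, this function equals the constant $\int_\Reals X(t,y)\, v_0(y)\, \rho_0(dy)$ for a.e.\ $\tau \in [0,t]$, hence certainly for a.e.\ $\tau \in [0,s]$. Integrating from $0$ to $s$ therefore yields
$$\int_0^s \int_\Reals X(t,y)\, \dot X(\tau, y)\, \rho_0(dy)\, d\tau \;=\; s \int_\Reals X(t,y)\, v_0(y)\, \rho_0(dy).$$

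The second step is to swap the order of integration on the left-hand side. This is legitimate by Fubini: $X(t,\cdot) \in L^2(\rho_0)$ is independent of $\tau$ and $\dot X \in L^\infty([0,t]; L^2(\rho_0))$ because $X$ is Lipschitz into $L^2(\rho_0)$, so the integrand is absolutely integrable on $[0,s] \times \Reals$ against $d\tau \otimes \rho_0$. After swapping and applying the Bochner fundamental theorem of calculus cited above, the left-hand side becomes
$$\int_\Reals X(t,y) \left( \int_0^s \dot X(\tau, y)\, d\tau \right) \rho_0(dy) \;=\; \int_\Reals X(t,y) \bigl[ X(s,y) - y \bigr] \rho_0(dy).$$
Rearranging now gives exactly \eqref{projectionIdentity}.

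I do not expect any real obstacle here: the only points to verify carefully are (i) that the conditional-expectation identity \eqref{XteeIdentity}, which holds only for almost every $s$, is genuinely being used on a full-measure set of $\tau \in [0,s]$ (which is automatic), and (ii) that the Bochner FTC is applicable, which follows from the Lipschitz continuity of $X$ as an $L^2(\rho_0)$-valued curve stated in \cite{Hynd2019}. The monotonicity and the factoring $X(t,\cdot) = f_{t,s}(X(s,\cdot))$ enter only indirectly, through their earlier use in establishing \eqref{XteeIdentity}; they play no further role in the present lemma.
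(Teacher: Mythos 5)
Your argument is correct and is essentially the paper's own proof written in integral rather than differential form: the paper defines $g(s):=\int_{\Reals}X(t,y)\{X(s,y)-[y+sv_0(y)]\}\rho_0(dy)$, notes $g'(s)=0$ a.e.\ by \eqref{XteeIdentity}, and concludes $g\equiv g(0)=0$, which is exactly your integration of \eqref{XteeIdentity} over $[0,s]$ combined with the Bochner fundamental theorem of calculus for the Lipschitz curve $X:[0,\infty)\rightarrow L^2(\rho_0)$. No substantive difference; both hinge on \eqref{XteeIdentity}, the absolute continuity of $X$ in $L^2(\rho_0)$, and $X(0,\cdot)=\mathrm{id}_\Reals$.
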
  
\begin{proof}
Set 
$$
g(s):=  \int_{\Reals}X(t,y) \big\{X(s,y)-[y+s v_0(y)]\big\}\rho_0(dy)
$$
for $s\in [0,t]$. Note that
$$
g'(s)=  \int_{\Reals}X(t,y) [\dot X(s,y)- v_0(y)]\rho_0(dy)=0
$$
for almost every $s\in [0,t]$ by \eqref{XteeIdentity}. Thus, $g(s)=g(0)=0$ for 
all $s\in [0,t]$. 
\end{proof}

\par The situation we have been interested in is when the flow is bounded.
\begin{theorem}
Suppose there is a real number $R$ such that 
$$
|X(t,y)|\le R\; \text{ for all $y\in \textup{spt}(\rho_0)$ and $t\ge 0$.}
$$
Then the limit 
$$
\lim_{t\rightarrow\infty}X(t,\cdot)\mbox{ exists in }L^2(\rho_0).$$
\end{theorem}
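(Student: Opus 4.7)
The plan follows the structure of the proof of Theorem~\ref{main1}, with the identity \eqref{projectionIdentity} taking over the role played by the convex-cone projection identities \eqref{gener-ineq} and \eqref{gener-eq}. Setting $t=0$ in the hypothesis gives $|y|\le R$ on $\mathrm{spt}(\rho_0)$, so $\rho_0$ has compact support and the family $\{X(t,\cdot)\}_{t\ge 0}$ is uniformly bounded in $L^\infty(\rho_0)\subset L^2(\rho_0)$; in particular every sequence of times tending to $\infty$ has a weakly $L^2(\rho_0)$-convergent subsequence.

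The first substantive step is to control the scalar function $\Theta(t):=\int_{\Reals} X(t,y) v_0(y)\rho_0(dy)$. Since $\dot X(t,\cdot)=\E_{\rho_0}[v_0|X(t,\cdot)]$, the defining property of conditional expectation (testing against $\E_{\rho_0}[v_0|X(t,\cdot)]$ itself) yields
\[
\Theta'(t)=\bigl\|\E_{\rho_0}[v_0|X(t,\cdot)]\bigr\|_{L^2(\rho_0)}^2\ge 0,
\]
while choosing $s=t$ in \eqref{projectionIdentity} gives $\|X(t,\cdot)\|_{L^2(\rho_0)}^2=\int_\Reals X(t,y)y\,\rho_0(dy)+t\Theta(t)$. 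Integrating $\Theta'$ and using boundedness of $\|X(t,\cdot)\|_{L^2(\rho_0)}^2$ by $R^2$, the monotonicity of $\Theta$ forces $\Theta\le 0$ on $[0,\infty)$ with $\Theta(t)\to 0$ as $t\to\infty$. Moreover, $-\Theta$ is nonincreasing, nonnegative, and integrable on $[0,\infty)$, whence a standard tail estimate on monotone integrable functions gives $t\Theta(t)\to 0$.

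Now let $s_n,t_n\to\infty$ be arbitrary sequences along which $X(s_n,\cdot)\rightharpoonup f$ and $X(t_n,\cdot)\rightharpoonup g$ in $L^2(\rho_0)$. Applying \eqref{projectionIdentity}, first fixing $s$ and letting $t=t_n\to\infty$ (so the $s\Theta(t_n)$ contribution vanishes), then letting $s=s_n\to\infty$, and finally repeating with the roles of $f$ and $g$ interchanged, one arrives at
\[
\int_\Reals fg\,\rho_0(dy)=\int_\Reals fy\,\rho_0(dy)=\int_\Reals gy\,\rho_0(dy).
\]
The identity with $s=t$ together with $\Theta\le 0$ gives $\|X(t,\cdot)\|_{L^2(\rho_0)}^2\le \int_\Reals X(t,y)y\,\rho_0(dy)$, and weak lower semicontinuity of the norm then delivers $\|f\|_{L^2(\rho_0)}^2\le \int_\Reals fy\,\rho_0(dy)$ and its analogue for $g$. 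Combining these bounds yields $\|f-g\|_{L^2(\rho_0)}^2\le 0$, so $f=g$; uniqueness of weak subsequential limits upgrades to $X(t,\cdot)\rightharpoonup h$ in $L^2(\rho_0)$ as $t\to\infty$.

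Finally, I promote weak convergence to strong by showing $\|X(t,\cdot)\|_{L^2(\rho_0)}\to \|h\|_{L^2(\rho_0)}$. From $\|X(t,\cdot)\|_{L^2(\rho_0)}^2=\int_\Reals X(t,y)y\,\rho_0(dy)+t\Theta(t)$ and $t\Theta(t)\to 0$, the norm squared converges to $\int_\Reals h(y)y\,\rho_0(dy)$. On the other hand, passing to $t_n\to\infty$ in \eqref{projectionIdentity} with $s$ fixed produces $\int_\Reals h(y)X(s,y)\rho_0(dy)=\int_\Reals h(y)y\,\rho_0(dy)$ for every $s\ge 0$, and then letting $s\to\infty$ using weak convergence of $X(s,\cdot)$ to $h$ yields $\|h\|_{L^2(\rho_0)}^2=\int_\Reals h(y)y\,\rho_0(dy)$. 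Weak convergence together with norm convergence in a Hilbert space gives strong convergence, concluding the proof. The main obstacle I anticipate is securing the decay $t\Theta(t)\to 0$ and carefully juggling the iterated limits in \eqref{projectionIdentity}; once these are in hand, everything else is bookkeeping extracted from that single identity.
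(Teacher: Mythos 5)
Your argument is correct, but it takes a genuinely different route from the paper's. The paper exploits the monotonicity of $X(t,\cdot)$ on $\mathrm{spt}(\rho_0)$: Helly's selection theorem gives pointwise (hence, by boundedness, strong $L^2(\rho_0)$) subsequential limits $Y,Z$, the identity \eqref{projectionIdentity} forces $\int_\Reals Y v_0\,\rho_0(dy)=0$ simply because the right-hand side would otherwise grow linearly in $s$ while the left-hand side stays bounded, and the four resulting scalar identities give $\|Y-Z\|_{L^2(\rho_0)}=0$ directly — no weak-to-strong upgrade is needed. You instead work with weak $L^2(\rho_0)$-compactness and transplant the Section~3 mechanism into the Lagrangian setting: the scalar $\Theta(t)=\langle X(t,\cdot),v_0\rangle_{L^2(\rho_0)}$ is nondecreasing (your computation $\Theta'=\|\E_{\rho_0}[v_0|X(t,\cdot)]\|^2_{L^2(\rho_0)}$ is the Lagrangian analogue of Lemma \ref{V0Nt}), the choice $s=t$ in \eqref{projectionIdentity} plays the role of \eqref{gener-eq-Nt} and yields $\Theta\le 0$, $\Theta(t)\to 0$, and the double-limit bookkeeping then pins down the weak limit and its norm. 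What your route buys is independence from monotonicity of $X(t,\cdot)$ (only the $L^\infty$ bound and \eqref{projectionIdentity} are used), at the price of the extra weak-to-strong step; the paper's route is shorter because strong subsequential convergence comes for free from Helly.

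Two small points. First, you assert but do not justify that $-\Theta$ is integrable on $[0,\infty)$; this is true and needs one line: since $X$ is Lipschitz into $L^2(\rho_0)$ and $\langle X(t,\cdot),\dot X(t,\cdot)\rangle_{L^2(\rho_0)}=\Theta(t)$ (take $\zeta=\mathrm{id}$ in the definition of the conditional expectation, legitimate because $X(t,\cdot)$ is bounded), one has $\int_0^T(-\Theta(t))\,dt=\tfrac12\|X(0,\cdot)\|^2_{L^2(\rho_0)}-\tfrac12\|X(T,\cdot)\|^2_{L^2(\rho_0)}\le \tfrac12 R^2$. Second, the decay $t\Theta(t)\to 0$ is actually dispensable: from $\Theta\le 0$ and $s=t$ in \eqref{projectionIdentity} you get $\|X(t,\cdot)\|^2_{L^2(\rho_0)}\le \int_\Reals X(t,y)\,y\,\rho_0(dy)$, so once $\|h\|^2_{L^2(\rho_0)}=\int_\Reals h(y)\,y\,\rho_0(dy)$ is known, weak convergence gives $\limsup_{t\to\infty}\|X(t,\cdot)\|^2_{L^2(\rho_0)}\le \|h\|^2_{L^2(\rho_0)}\le \liminf_{t\to\infty}\|X(t,\cdot)\|^2_{L^2(\rho_0)}$, which already yields norm convergence and hence strong convergence. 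With either repair your proof is complete.
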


\begin{proof}
Let $\{t_n\}_{n\in \N}$ be a sequence of nonnegative numbers increasing to infinity. Then 
$$
X(t_n,\cdot):\textup{spt}(\rho_0)\rightarrow \Reals
$$
defines a bounded sequence of monotone functions indexed by $n\in \N$.  By Helly's selection theorem, there is a subsequence (not relabeled) and a monotone function $Y$ such that 
$$
X(t_n,y)\rightarrow Y(y)
\mbox{ for all }y\in\textup{spt}(\rho_0).$$  It also follows easily that $X(t_n,\cdot)\rightarrow Y$ as $n\rightarrow\infty$ in $L^2(\rho_0)$. 

\par Letting $t=t_n$ in the identity \eqref{projectionIdentity} and sending $n\rightarrow\infty$ gives  
$$
  \int_{\Reals}Y(y) X(s,y)\rho_0(dy)=  \int_{\Reals}Y(y)[y+s v_0(y)]\rho_0(dy)
$$
for all $s\ge 0$. Since $Y$ and $X(s,\cdot)$ assume all their values in $[-R,R]$, it must be that 
$$
 \int_{\Reals}Y(y) v_0(y)\rho_0(dy)=0.
$$
Therefore, 
\be\label{YXsIdentity}
  \int_{\Reals}Y(y) X(s,y)\rho_0(dy)=  \int_{\Reals}yY(y)\rho_0(dy)
\ee
for all $s\ge 0$.  And sending $s=t_n\rightarrow\infty$ gives 
\be\nonumber
  \int_{\Reals}Y^2(y)\rho_0(dy)=  \int_{\Reals}yY(y)\rho_0(dy).
\ee

\par Let us now assume there is another sequence $\{s_n\}_{n}\rightarrow\infty$ such that $X(s_n,\cdot)$ converges pointwise to a limit function $Z$. Then the arguments above give 
$$
 \int_{\Reals}Z(y) v_0(y)\rho_0(dy)=0
\quad \text{and}\quad  \int_{\Reals}Z^2(y)\rho_0(dy)=  \int_{\Reals}yZ(y)\rho_0(dy).
$$
We may also send $s=s_n\rightarrow\infty$ in \eqref{YXsIdentity} to get 
$$
  \int_{\Reals}Y(y) Z(y)\rho_0(dy)=  \int_{\Reals}yY(y)\rho_0(dy).
$$
Likewise, we find
$$
  \int_{\Reals}Z(y)Y(y)\rho_0(dy)=  \int_{\Reals}yZ(y)\rho_0(dy).
$$

\par Employing the various equalities we have derived involving $Y$ and $Z$ gives 
\begin{align}\nonumber
\|Y-Z\|^2_{L^2(\rho_0)}&= \|Y\|^2_{L^2(\rho_0)}+ \|Z\|^2_{L^2(\rho_0)}-2\langle Y,Z\rangle_{L^2(\rho_0)}\\
&= \langle\text{id}_\Reals,Y\rangle_{L^2(\rho_0)}+ \langle\text{id}_\Reals,Z\rangle_{L^2(\rho_0)}-2 \langle Y,Z\rangle_{L^2(\rho_0)}\nonumber\\
&=0\nonumber.
\end{align}
Thus, $Y(y)=Z(y)$ for $\rho_0$ almost every $y\in \R$. Since the sequential limits of the family $\{X(t)\}_{t\ge 0}$ in $L^2(\rho_0)$ are independent of the sequence, the proof is concluded.
\end{proof}
Let us denote 
$$
X^\infty:=\lim_{t\rightarrow\infty}X(t,\cdot).
$$
 A direct corollary of the theorem is that the particle distribution $X(t)_{\#}\rho_0$ converges to $X^\infty_{\#}\rho_0$ in ${\calP}_2(\Reals)$ as $t\rightarrow\infty$. Adapting the proof of the theorem, we may also conclude that 
$$
\int_{\R}h(X^\infty(y))v_0(y)\rho_0(dy)=0
$$
for any bounded Borel $h: \Reals\rightarrow \Reals$.  That is, $\E_{\rho_0}[v_0|X^\infty]=0$.

\par Finally, we note that if we only required 
$$
\sup_{t\geq0}\|X(t,\cdot)\|_{L^2(\rho_0)}<\infty,
$$
we would be able to conclude that the limit  $\displaystyle\lim_{t\rightarrow\infty}X(t,\cdot)$ exists weakly in $L^2(\rho_0)$. 

\section*{Acknowledgements}
Ryan Hynd was partially supported by an AMS Claytor-Gilmer fellowship.

\end{document}